\newcommand{\lesss}{\lesssim_S}
\newcommand{\const}{\mbox{\rm const}}
\newcommand{\eps}{{\varepsilon}}
\newcommand{\R}{{\mathbb R}}
\newcommand{\Z}{{\mathbb Z}}
\renewcommand{\S}{{\mathbb S}}
\newcommand{\la}{\langle}
\newcommand{\ra}{\rangle}
\renewcommand{\ln}{\log}
\newcommand\FH{{\mathcal F}_H}
\newcommand\FtH{{\mathcal F}_{\tilde H}}
\newcommand{\les}{\lesssim}
\newcommand{\ges}{\gtrsim}
\newcommand{\ga}{\gamma}
\newcommand{\tu}{\epsilon}
\newcommand{\tw}{\gamma}
\newcommand{\bu}{\bar u} 
\newcommand{\bw}{\bar w} 
\newcommand{\dHe}{{\dot{H}_e^1}}
\newcommand{\He}{{H_e^1}}
\newtheorem{theorem}{Theorem}
\newtheorem{lemma}[theorem]{Lemma}
\newtheorem{defi}[theorem]{Definition}
\newtheorem{prop}[theorem]{Proposition}
\newtheorem{proposition}[theorem]{Proposition}
\newtheorem*{conj}{Conjecture}
\newtheorem*{open}{Open Problem}
\theoremstyle{remark}
\newtheorem{remark}[theorem]{Remark}
\numberwithin{theorem}{section}
\newcommand{\calE}{{\mathcal E}}
\renewcommand{\Re}{\,{\rm Re}\,}
\renewcommand{\hat}{\widehat}
\renewcommand{\epsilon}{\eps}
\numberwithin{equation}{section}
\numberwithin{theorem}{section}
\begin{document}

\title{A codimension two stable manifold of near soliton
equivariant wave maps}

\author{I. Bejenaru}
\address{ Department of Mathematics,
  University of Chicago, 5734 S. University Ave, Chicago, IL 60637}
\email{bejenaru@math.uchicago.edu}

\author{J.\ Krieger}
\address{EPFL, Batiment des MathŽmatiques,
Station 8,
CH-1015 Lausanne}
\email{joachim.krieger@epfl.ch}

\author{D.\ Tataru}
\address{Department of Mathematics, The University of California at Berkeley, Evans Hall, Berkeley, CA 94720, U.S.A.}
\email{tataru@math.berkeley.edu}

\thanks{I.B. was supported in part by NSF grant DMS-1001676. J. K. was partially supported by NSF grant DMS-0757278.  D.T. was supported in part  by NSF grant  DMS0354539, as well as by the Miller Foundation}

\begin{abstract} We consider finite energy equivariant solutions for
  the wave map problem from $\R^{2+1}$ to $\S^2$ which are close to the
  soliton family.  We prove asymptotic orbital stability for a
  codimension two class of initial data which is small with respect to
  a stronger topology than the energy.
\end{abstract}

\maketitle

\section{Introduction}
\label{sec:intro}

We consider Wave Maps $U:\R^{2+1}\rightarrow \S^{2}$ which are
equivariant with co-rotation index~$1$. In particular, they satisfy
$U(t,\omega x)=\omega U(t,x)$ for $\omega\in SO(2,\R)$, where the
latter group acts in standard fashion on $\R^{2}$, and the action on
$\S^{2}$ is induced from that on $\R^{2}$ via stereographic projection.
Wave maps are characterized by being critical with respect to the
functional
\begin{equation}\nonumber
  U\rightarrow \int_{\R^{2+1}}\la \partial_{\alpha}U, \partial^{\alpha}U\ra \,d\sigma,
\qquad \,\alpha=0,1,2
\end{equation}
with Einstein's summation convention being in force,
$\partial^{\alpha}=m^{\alpha\beta}\partial_{\beta}$,
$m_{\alpha\beta}=(m^{\alpha\beta})^{-1}$ the Minkowski metric on
$\R^{2+1}$, and $\,d\sigma$ the associated volume element. Also,
$\la\cdot,\cdot\ra$ refers to the standard inner product on $\R^{3}$
if we use ambient coordinates to describe $u$, $\partial_{\alpha}u$
etc. Recall that the energy is preserved:
\[
\calE(u) = \frac12 \int_{\R^2} \la DU(\cdot,t),DU(\cdot,t)\ra\, dx =\const
\]
The problem at hand is {\it{energy critical}}, meaning that the
conserved energy is invariant under the natural re-scaling
$U\rightarrow U(\lambda t, \lambda x)$.

We focus on a particular subset of equivariant maps characterized by the additional
property that $U(t,r,\theta)=(u(t,r),\theta)$ in spherical coordinates, where, on the right-hand side,
$u$ stand for the longitudinal angle and $\theta$ stands for the latitudinal angle, while,
on the left-hand side, $r,\theta$ are the polar coordinates on
$\R^{2}$. Now $u(t,r)$, a scalar function, satisfies
the equation
\begin{equation}\label{maineq}
  -u_{tt}+u_{rr}+\frac{u_{r}}{r}=\frac{\sin(2u)}{2r^{2}}
\end{equation}
Then the energy has the form
\begin{equation} \label{E}
\calE(u) = \pi \int_{\R^2} (|u_t|^2 + |u_r|^2 + \frac{\sin^2(u)}{r^{2}}) rdr
\end{equation}
We shall be interested in co-rotational maps that are topologically
non-trivial,namely with
\[
u(t,0)= 0, \qquad u(t, \infty) = \pi. 
\]
A natural space adapted to the elliptic part of this energy is $\dHe$
\[
\| f \|^2_{\dHe} = \| \partial_r f \|_{L^2}^2 + \| \frac{f}r \|_{L^2}^2
\]
This is the equivariant translation of the usual two dimensional space $\dot H^1$.
The size of the elliptic part of the energy of $u$ in \eqref{E} and its $\dHe$  norm
are comparable provided that $u$ is small pointwise. This is not true directly
for $u$ but it true after we subtract from $u$ the "nearby" soliton which we describe below.

The solitons for \eqref{maineq} have the form
\[
Q_\lambda (r)= Q(\lambda r), \qquad Q(r) = 2\arctan r, \qquad \lambda \in \R_+=(0,\infty)
\]
and are global minimizers of the energy $\calE$ within their homotopy 
class, $\calE(Q_\lambda)=4\pi$. 

We consider solutions $u$ which are close to the soliton in the sense 
that
\begin{equation} \label{unearQ}
\calE(u) - \calE(Q) \ll 1
\end{equation}
As it turns out, such solutions must stay close to the soliton 
family $\{ Q_\lambda \}$ due to the bound
\begin{equation}\label{minlambda}
\inf_\lambda \|(u(t)-Q_\lambda)\|_{\dHe}^2 + \|u_t(t)\|_{L^2}^2
\sim \calE(u) - \calE(Q)
\end{equation}
Indeed, this follows for example from \cite{Co}. 
Thus at any given $t$ one can choose some $\lambda(t)$ so that
\begin{equation}\label{goodlambda}
 \| (u(t)-Q_\lambda)\|_{\dHe}^2 + \|u_t(t)\|_{L^2}^2
\sim \calE(u) - \calE(Q)
\end{equation}
Such a parameter $\lambda$ is uniquely determined up to an error of
size $O((\calE(u) - \calE(Q))^\frac12)$. One can for instance choose
$\lambda$ to be the minimizer in \eqref{minlambda} though there are no
obvious benefits to be derived from that. Another equivalent choice 
is more direct, namely by the relation
\begin{equation}
u(t,\lambda^{-1}(t)) = \frac{\pi}2
\end{equation}
and this still satisfies \eqref{goodlambda}, see for instance \cite{BT}.
Since this problem is locally well-posed in the energy space,
scaling considerations show that (for well chosen $\lambda(t)$)
we have 
\begin{equation}\label{lambdas}
\left|\frac{d}{dt} \lambda(t) \right| \lesssim \lambda^{-2} 
\end{equation}
so at least locally $\lambda$ stays bounded.
Then the main question to ask is as follows:

\begin{open}\nonumber
What is the behavior of the function $\lambda(t)$ for equivariant maps  
 satisfying \eqref{unearQ} ?
\end{open}
We can distinguish several interesting plausible scenarios:

\begin{itemize}
\item{\em Type 1:} $\lambda(t) \to \infty$ as $t \to t_0$ (finite time
  blow-up). By \eqref{lambdas} this can only happen at rates
  $\lambda(t) \gtrsim |t-t_0|^{-1}$. The above extreme corresponds to
  self-similar concentration; this can be thought also as a
  consequence of the finite speed of propagation. In effect, by the
  important work \cite{Str2}, it is known that such blow up can only
  occur with speed strictly faster than self-similar:
\[
\lambda(t)|t-t_0|\rightarrow\infty
\]

\item{\em Type 2:} $\lambda(t) \to \infty$ as $t \to \infty$ (infinite
  time focusing).

\item{\em Type 3:} $\lambda(t) \to 0$ as $t \to \infty$ ( infinite
  time relaxation).  By \eqref{lambdas} this can only happen at rates
  $\lambda(t) \gtrsim t^{-1}$, which corresponds to self-similar
  relaxation.

\item{\em Type 4:} $\lambda(t)$ stays in a compact set globally in
  time.  Then we have a global solution, and possibly a resolution
  into a soliton plus a dispersive part.

\end{itemize}

Blow-up solutions of Type 1 were constructed not long ago in two quite
different papers \cite{KST} and \cite{Ro-St}, and the result of the
latter paper was significantly strengthened and generalized in
\cite{Ra-Ro}. The behavior of $\lambda(t)$ in \cite{KST} as $t \to 0$
is given by
\[
\lambda(t) = t^{-1-\nu}, \qquad \nu \geq  1
\]
(here the restriction $\nu \geq 1$ seems technical, should really be $\nu > 0$)
while that in  \cite{Ra-Ro} is 
\[
\lambda(t) \sim t^{-1} e^{c \sqrt{\log t}}
\]
The latter solutions were also proved to be stable with respect to
class of small smooth perturbations. It is not implausible that the
set of all blow-up solutions is open in a suitable topology, although
numerical evidence in \cite{Bi} appears to suggest the existence of a
co-dimension one manifold of data leading to an unstable blow up,
which separates scattering solutions from a stable regime of finite
time blow up solutions.

Up to this point we are not aware of any examples of solutions of type
2, 3 and also of type 4 other than the $Q_\lambda$'s in the wave maps
context, although recent work \cite{Gu-Na-Tsai} revealed unusual
solutions of this type in the context of the Landau-Lifshitz
equation. Earlier work \cite{KS2} showed the existence of type 4
solutions for the critical focusing nonlinear wave equation on
$\R^{3+1}$.

Understanding the general picture for data in the energy space 
seems out of reach for now. However, there is a simpler question one may 
ask, namely what happens for data which is close to a soliton in a stronger
topology, which includes both extra regularity and extra decay at infinity.
Neither the results of \cite{KST} nor the ones in \cite{Ra-Ro} apply 
in this context. A good starting point for this investigation is the following

\begin{conj}
There exists a codimension one set of (small) data leading to Type 4 solutions, 
which separates Type 1 and Type 3 solutions.
\end{conj}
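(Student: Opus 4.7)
The plan is to realize the stated codimension one set as the (nonlinear) stable manifold of the soliton family for the flow near $Q$, via modulation theory combined with a Lyapunov--Schmidt reduction. To begin I would write
\begin{equation}\nonumber
  u(t,r) = Q_{\lambda(t)}(r) + v(t,r),
\end{equation}
with $\lambda(t)$ fixed by the gauge $u(t,\lambda^{-1}(t))=\pi/2$ recalled in the excerpt, and expand \eqref{maineq} around $Q$ to obtain a radial wave equation
\begin{equation}\nonumber
  -v_{tt} + Hv = N(v,\lambda,\dot\lambda), \qquad H = -\partial_r^2 - \frac{1}{r}\partial_r + \frac{\cos(2Q)}{r^2},
\end{equation}
where $N$ collects the modulation error together with the quadratic-type nonlinearity. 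The basic spectral input is the Darboux factorization $H = \calA^*\calA$, which encodes the fact that $\Lambda Q = rQ'$ is a zero resonance of $H$ but not in $L^2$ -- i.e.\ the scaling mode of $\{Q_\lambda\}$ manifests itself as a threshold resonance rather than an eigenvalue.

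Next I would build a distorted Fourier transform adapted to $H$ and diagonalize the linearized flow through it. The projection of $v$ onto the resonance produces a modulation ODE of the schematic form $\dot\lambda = c\lambda\la v,\Lambda Q\ra + O(\|v\|^2)$, so choosing $\lambda(t)$ to kill this projection effectively removes the zero mode from the perturbation equation. After this reduction the remaining effective linear operator on the transverse class should carry continuous spectrum $[0,\infty)$ together with a single pair of simple eigenvalues $\pm\mu$, $\mu>0$, giving exactly one exponentially growing and one exponentially decaying direction. Verifying that this pair exists as genuine eigenvalues -- rather than embedded resonances -- is the key spectral fact and I would establish it by a Jost function / shooting computation on $\calA^*\calA$ perturbed by the relevant first-order and potential terms arising from the modulation.

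With the linear picture in place I would construct the codimension one manifold by a Hadamard--Perron / graph transform argument, or equivalently by direct shooting: along any smooth one-parameter family of small data transverse to the candidate manifold, one end should produce $\lambda(t)\to\infty$ (Type~1, via a virial/variance bootstrap in the spirit of \cite{Str2}) while the opposite end should produce $\lambda(t)\to 0$ (Type~3, via monotonicity of a suitable Lyapunov functional built from the radiation). Continuity in the data then forces some intermediate parameter value at which the coefficient of the growing mode vanishes; for that datum a nonlinear bootstrap on the stable and neutral (radiation) coordinates keeps $\lambda(t)$ in a compact set and yields Type~4 behavior. The single scalar condition ``growing-mode coefficient equals zero'' cuts out a topological codimension-one set in the small-data regime, and by construction this set separates Type~1 from Type~3.

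The principal obstacle, I expect, will be the nonlinear closure of the continuous-spectrum part. In two spatial dimensions the wave equation enjoys only a logarithmic dispersive improvement, and the zero resonance of $H$ prevents any true spectral gap near frequency zero, so the ODEs for $\lambda(t)$ and for the $\pm\mu$ coordinates couple delicately with the radiation $v$ through low-frequency interactions. The hard technical work will be to identify a resolution norm, strictly stronger than $\dHe$ and incorporating both weighted regularity and decay at infinity, in which the distorted Fourier multipliers are bounded, the quadratic nonlinearity $N$ closes without a logarithmic loss, and the modulation error can be absorbed. This tension between the low-frequency resonance and the weak $2$D dispersion is almost certainly the reason the sharp theorem of the present paper is stated as a codimension two result in a stronger topology rather than as the full codimension one statement in the energy space.
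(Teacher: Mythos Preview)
The statement you are attempting to prove is labeled in the paper as a \emph{Conjecture}, not a theorem; the paper does not prove it. The paper's main result is the weaker codimension-\emph{two} Theorem~\ref{t:main}, and the authors explicitly say they are ``one dimension short'' of the conjecture. So there is no ``paper's own proof'' to compare against, and your proposal should be read as an attempted attack on an open problem.

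More importantly, your spectral picture is incorrect, and this is a genuine gap rather than a technicality. You posit that after modulation the effective linear operator carries ``a single pair of simple eigenvalues $\pm\mu$, $\mu>0$,'' and you build the entire codimension-one manifold around the vanishing of the growing-mode coefficient. But the paper states (Section~\ref{gauge}) that $H=L^*L$ is \emph{nonnegative} with purely absolutely continuous spectrum $[0,\infty)$; there is no negative eigenvalue and hence no hyperbolic pair $\pm\mu$. The only distinguished spectral object is the zero \emph{resonance} $\phi_0=h_1$, which is exactly the scaling mode $\Lambda Q$ you already accounted for by modulation. Once you gauge it away there is nothing left to shoot on: the linearized flow has no exponentially growing direction, so the Hadamard--Perron / graph-transform machinery and the ``growing-mode coefficient equals zero'' condition have no content here. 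The soliton instability for this problem is a threshold-resonance (center-type) phenomenon, not a saddle.

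This is also why the paper's actual argument for the codimension-two result looks nothing like your outline. Rather than work with $v=u-Q_\lambda$ and $H$, the authors pass to the nonlinear gauge variable $w=\partial_r u-\frac{1}{r}\sin u$ of \eqref{wdef}, whose linearization is governed by the conjugate operator $\tilde H=LL^*$. The point is that $\tilde H$ has a milder zero mode $\psi_0$ (logarithmic growth rather than the $L^2$-borderline $h_1$), so the $\tilde H$-wave equation enjoys usable dispersive/energy estimates once one imposes the two orthogonality conditions \eqref{nonres}: $\la w_0,\psi_0\ra=0$ and $\la w_1,\psi_0\ra=0$. Those two scalar constraints---not any unstable eigenvalue---are the source of the codimension two. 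Reducing this to codimension one, if possible at all, would require understanding the nonlinear dynamics when one of these resonance conditions fails, which is precisely the part the paper leaves open.
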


One should take this only as a rough guide; some fine adjustments may
be needed. Our main result is to construct a large class of Type 4
solutions:

\begin{theorem}
There exists a codimension two set of Type 4 equivariant wave maps satisfying
 \eqref{unearQ}.
\end{theorem}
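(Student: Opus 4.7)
The plan is to combine modulation theory with a Lyapunov--Perron style construction, using sharp dispersive estimates for the linearization around the soliton. First write $u(t,r)=Q_{\lambda(t)}(r)+w(t,r)$ and choose the modulation parameter $\lambda(t)$ to impose an orthogonality condition on $w$ against a renormalized scaling mode such as $\Lambda Q_{\lambda}=r\partial_r Q_{\lambda}$. Substituting into \eqref{maineq} gives a coupled system: a wave equation of the form $(\partial_t^2+H_{\lambda(t)})w=N(w)+(\text{modulation corrections})$, where $H_\lambda$ is the Jacobi operator obtained by linearizing around $Q_\lambda$, together with an ODE determining $\dot\lambda$ from the orthogonality constraint, compatible with the bound \eqref{lambdas}.

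The next step is spectral analysis of $H=H_1$. Because $\Lambda Q$ is an infinitesimal scaling mode annihilated by $H$, the operator admits a supersymmetric factorization $H=L^*L$, so $H\ge 0$; its spectrum is purely absolutely continuous and equal to $[0,\infty)$, with a threshold resonance at the bottom since $\Lambda Q\sim 2/r$ at infinity is barely out of $L^2(rdr)$. Using the Hankel-type distorted Fourier transform adapted to $H$ through this factorization, one then establishes decay, Strichartz, and local energy estimates for the linear flow $\partial_t^2+H$, in which the zero resonance produces weaker dispersion that must be tracked with appropriate weights.

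In phase space $(w,w_t)$, the resonance of $H$ generates a two-dimensional generalized null subspace, morally spanned by $(\Lambda Q,0)$ and $(0,\Lambda Q)$; components in this subspace evolve linearly in $t$ under the linearized flow and are precisely what drives $\lambda(t)$ away from a compact set, producing Type 2 or Type 3 behavior. To force the Type 4 conclusion we must suppress both components, which amounts to two linear constraints on the initial data $(w(0),w_t(0))$; this is the source of the codimension two. One of the constraints is absorbed by the free choice of modulation parameter, but the remaining one is a genuine obstruction that must be eliminated by a Lyapunov--Perron fixed point which selects the correct initial data within the codimension-two submanifold.

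The proof then closes via a contraction in a Banach space encoding the weighted linear estimates, for the coupled unknown $(w,\lambda)$, with the two free parameters perpendicular to the null subspace serving as local coordinates on the stable manifold. The hard part, I expect, is the sharp linear theory for $\partial_t^2+H$ at the presence of the zero resonance: one needs dispersive bounds strong enough to absorb the cubic-type nonlinearity and the modulation remainder, while remaining precise enough to isolate and quotient out the two slow modes. A secondary difficulty is the time-dependence of the base point $\lambda(t)$, which forces the linearization to be performed along a moving soliton; estimates of the form \eqref{lambdas} together with bootstrapping will be needed to treat $H_{\lambda(t)}$ as a controlled perturbation of a frozen $H_{\lambda_\infty}$ on long time intervals, so that the frozen-coefficient linear theory can be transferred to the genuinely time-dependent problem.
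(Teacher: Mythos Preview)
Your proposal follows the natural first instinct---modulation plus dispersive estimates for $H$---but this is precisely the route the paper argues \emph{against}, and there is a genuine obstruction. The zero mode $\Lambda Q=\phi_0\sim 2/r$ of $H$ is a resonance, not an $L^2$ eigenvalue, so it cannot be projected away by a standard orthogonality condition; the pairing $\langle w,\Lambda Q_\lambda\rangle$ you would need to define the modulation ODE is not obviously well defined without extra decay, and the low-frequency behavior of the linear flow $e^{\pm it\sqrt H}$ is too weak to close a contraction against the nonlinearity. You acknowledge this (``weaker dispersion that must be tracked with appropriate weights'') but do not explain how to overcome it, and the paper's view is that one cannot, at least not directly.

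The key idea you are missing is a change of dependent variable that simultaneously removes the resonance and eliminates modulation altogether. The paper sets
\[
w=\partial_r u-\frac{\sin u}{r},
\]
which vanishes identically on every $Q_\lambda$, so the soliton family collapses to a point and there is no $\lambda(t)$ to track. The equation for $w$ is governed not by $H=L^*L$ but by the conjugate $\tilde H=LL^*$, whose zero mode $\psi_0$ grows only logarithmically at infinity; this is enough to restore usable dispersive bounds. The codimension two then appears transparently as the two scalar conditions $\langle w_0,\psi_0\rangle=\langle w_1,\psi_0\rangle=0$ on the linear scattering data at $t=+\infty$, and the nonlinear solution is built by a backward-in-time fixed point matching $\bar w$ asymptotically. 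Your codimension count is also internally inconsistent: if one of your two constraints is ``absorbed by the free choice of modulation parameter,'' the result would be codimension one, not two. In the paper's setup there is no modulation freedom to spend, and both conditions are genuine. Finally, the complication you anticipate with the time-dependent base point $H_{\lambda(t)}$ simply does not arise once one works in the gauge variable.
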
 

For a more precise formulation of the theorem we refer the reader to
Section~\ref{s:maint}. Compared with the conjecture above, one can see
that we are one dimension short. At this point it is not clear if this
is a technical issue, or something new happens. A plausible scenario
might be that the missing dimension may include Type 2 solutions, as
well as slowly relaxing Type 4 solutions.

One should also compare this result with the related problem for
Schr\"odinger maps. Although the solitons are the same and the
operator $H$ arising below in the linearization is also the same for
Schr\"odinger maps, in \cite{BT} it is shown that the solitons are
stable with respect to small localized perturbations. One way to 
explain this is that the linear growth in the resonant direction
occuring in the $H$-wave equation has a stronger destabilizing effect 
than the corresponding lack of decay in the $H$-Schr\"odinger  equation.

\subsection{Notations}
Here we introduce a few notation which will be used throughout the paper.
We slightly modify the use of $\la \cdot \ra$ in the following sense
\[
\la x \ra=\sqrt{4+x^2}, \qquad x \in \R
\]
For a real number $a$ we define $a^+=\max\{ 0, a \}$ and $a^-=\min\{0,a\}$.

We will use a dyadic partition of $\R_+$ into sets  $\{A_m\}_{m \in \Z}$ given by
\[
A_m = \{  2^{m-1} < r < 2^{m+1}\}.
\]

For given $M > 0$, we use smooth localization functions $\chi_{\les M}, \chi_{\ges M}$ 
forming a partition of unity for $\R_+$ and such that
\[
|(r\partial_r)^\alpha \chi_{\les M}| + |(r\partial_r)^\alpha \chi_{\ges M}| \les_\alpha 1
\]

\section{ The gauge derivative and linearizations} \label{gauge}

The linearized equation \eqref{maineq} around the soliton $Q$ has the form
\begin{equation} \label{mainv}
-v_{tt}-H v=0, \qquad H= - \partial_r^2 - \frac{1}r \partial_r + \frac{\cos(2Q)}{r^{2}}
\end{equation}
The elliptic operator $H$ admits the factorization
\begin{equation} \label{HL}
H=L^*L, \qquad 
L = h_1 \partial_r h_1^{-1} = \partial_r + \frac{h_3}{r}, \qquad
L^{*}=- h_1^{-1} \partial_r h_1 -\frac1{r}= -\partial_r +
\frac{h_3-1}{r}.
\end{equation}
where\footnote{throughout this paper we use
$\sin{Q}, \cos{Q}$ instead of $h_1,h_3$;  however the reader may need this 
correspondence in order to relate to  the work \cite{BT}.}
$h_1=\sin Q=\frac{2r}{1+r^2}, h_3 = -\cos{Q}=\frac{r^2-1}{r^2+1}$. 
$H$ is nonnegative and has a zero resonance 
\[
\phi_0= h_1 = \frac{2r}{1+r^2}
\]
This resonance is the reason why \eqref{mainv} does not have good
dispersive estimates.  Since $\phi_0$ fails to be an eigenvalue, we
cannot project it away as it is usually done in standard modulation
theory. This suggests that working with the variable $u$ and its
equation \eqref{maineq} runs into problems due to the lack of good
linear estimates needed to treat the nonlinearity. Therefore, instead
of working with the solution $u$ we introduce a new variable
\begin{equation}
w = \partial_r u - \frac{1}{r} \sin u
\label{wdef}\end{equation}
which has the nice property that 
\[
 w=0   \Longleftrightarrow u = Q_\lambda 
\]
for some $\lambda \in \R_+$. Indeed, by rearranging \eqref{E} and using $u(0)=0$, $u(\infty)=\pi$, 
we obtain 
\[
\calE(u) = \pi \int_{0}^\infty (|u_t|^2 + |w|^2) rdr +  \pi \int_0^\infty 2 \sin{u} \cdot \partial_r u dr 
= \pi \int_{0}^\infty (|u_t|^2 + |w|^2) rdr + 4 \pi
\]
from which the above observation follows. This type of change of variables originates at least with the work \cite{Gus-Kang-Tsai}. 
If $\lambda(t)$ is chosen such that \eqref{goodlambda} holds, then using \eqref{unearQ}, a direct computation shows that
\begin{equation}
\| u-Q_{\lambda} \|_{\dHe} \approx \| w \|_{L^2}^2.
\end{equation}
Then a direct computation shows that $w$ solves
\begin{equation}
w_{tt} - \Delta w + \frac{2(1+\cos u)}{r^2} w = \frac{1}{r} \sin u (u_t^2 -w^2)
\label{weq}\end{equation}
The function $u$ appears in this equation, but 
it can be recovered from $w$ by solving the  ode \eqref{wdef}
with $Q$-like ``data'' at $r = \infty$. 

We remark that the linearized form of \eqref{wdef} near $Q$
is 
\begin{equation}
z = (\partial_r  - \frac{1}{r} \cos Q) v = L v
\label{wtoulin}\end{equation}
where $L$ was introduced above in \eqref{HL}. 

On the other hand the linearized equation for $w$ near $Q$ has the form
\begin{equation}
z_{tt} - \Delta z + \frac{2(1+\cos Q)}{r^2} z = 0
\label{zeq}\end{equation}
This wave equation is governed by the operator 
\[
\tilde H =  -\Delta + \frac{2(1+\cos Q)}{r^2} =  -\Delta + \frac{4}{r^2(1+r^2)}= LL^*
\]
This operator is better behaved compared to $H$, in particular
its zero mode $\psi_0$ grows logarithmically at infinity.

The plan is to treat the equation \eqref{weq} in a perturbative manner
for the most part. To fix things, we will rewrite it
in the form
\begin{equation}
(\partial_t^2 + \tilde H)  w =  \frac{2(\cos Q- \cos u)}{r^2} w +
 \frac{1}{r} \sin u (u_t^2 -w^2):= N(w,u) 
\label{w1}\end{equation}
and work with this from here on.  The equation
\eqref{w1} for $w$ is preferable due to the nice dispersive properties
of its linear part. However, as $u$ occurs in the $w$ equation, one
has to also keep track of it through the elliptic equation
\eqref{wdef}.

In order to study this equation we
need to understand better the structure of its linear part, and, in
particular, the spectral theory for the operator $\tilde H$. This is
the subject of section \ref{s:ft}.

\subsection{Setup of the problem} \label{s:maint}

The starting point is to consider $\bw$ to be an exact real solution to the 
linear homogeneous equation
\begin{equation}\label{bw}
(\partial_t^2 + \tilde H) \bw = 0, \qquad w(0) = w_0,  \qquad w_t(0) = w_1
\end{equation}
where $w_0$ and $w_1$ are real Schwartz functions which are
assumed to satisfy the nonresonance conditions
\begin{equation}\label{nonres}
\la w_0, \psi_0 \ra = 0, \qquad \la w_1, \psi_0 \ra = 0
\end{equation}
We denote by $\bu$ the corresponding  map, see \eqref{wdef} (this will be made precise in Proposition~\ref{lul}),
obtained by solving the ode
\begin{equation} \label{budef}
\partial_r \bu - \frac{1}{r} \sin \bu = \bw, \qquad \bu \sim Q \ 
\text{as} \ r \to \infty
\end{equation}
Now we seek a solution to the nonlinear equation $u$ and its
associated gauge derivative $w$ close to $\bu,\bw$ respectively,
\begin{equation} \label{egdef}
u = \bu + \tu, \qquad w =\bw + \tw
\end{equation}
so that $u$ and $w$ match $\bu$ and $\bw$  asymptotically as $t \to \infty$.

By a slight abuse of notation we use $\| \cdot \|_S$ to denote a norm 
obtained by adding sufficiently many seminorms of the Schwartz space $S$.
We also use $\lesss$ for inequalities where the implicit constant depends 
on $\|(w_0,w_1)\|_{S}$. Modulo defining the $X$ and $LX$ norms, 
we are now in a position to restate our main result 
in a more detailed fashion.

\begin{theorem} \label{t:main}
  Let $w_0$, $w_1$ be  Schwartz functions  satisfy the
  nonresonance conditions \eqref{nonres}.  Let $\bu$ and $\bw$ be
  defined as above. Then there exists $T \lesss 1$ and a unique wave
  map $u$ in $[T,\infty)$ so that $u$ and $w$ match $\bu$ and $\bw$ as
  $t \to \infty$ in the following asymptotic fashion  for $t \in [T,\infty)$:
\begin{equation} \label{gboot}
  \| \tw(t)\|_{LX} \lesss  t^{-\frac32}, \qquad 
\| \partial_t \tw(t)\|_{LX} \lesss  t^{-\frac52},  \qquad 
\| \tw(t)\|_{\dot H^1} \lesss t^{-\frac52}
\end{equation}
respectively
\begin{equation} \label{eboot}
  \| \tu(t)\|_{X} \lesss t^{-\frac32}, \qquad 
\| \partial_t \tu(t)\|_{LX} \lesss t^{-\frac52}
\end{equation}
Furthermore, the map $u$ and its  corresponding gauge derivative $w$ 
have a Lipschitz dependence on $(w_0,w_1)$ with respect to the above norms. 
\end{theorem}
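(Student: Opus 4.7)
The approach is a backward-in-time contraction mapping on $[T,\infty)$ for $T \lesss 1$ large, constructing the wave map $u$ as a perturbation of an explicit pilot $(\bu, \bw)$. The pilot $\bw$ is the linear solution of \eqref{bw} for the given Schwartz data $(w_0, w_1)$, and $\bu$ is recovered from it via \eqref{budef}; the perturbation $(\tu, \tw) = (u - \bu, w - \bw)$ is then sought as the fixed point of a mapping on a Banach space of pairs satisfying the weighted estimates \eqref{gboot}--\eqref{eboot}.

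The first step is to establish the dispersive decay of the pilot. Using the distorted Fourier transform $\FtH$ developed in Section~\ref{s:ft}, the half-wave propagator for $\tilde H = LL^*$ enjoys enhanced dispersive decay precisely when the transform of the data vanishes, to suitable order, at zero frequency; the obstruction is the zero resonance $\psi_0$ of $\tilde H$, which grows logarithmically at infinity and hence is not $L^2$. The nonresonance conditions \eqref{nonres} are exactly these two scalar vanishing conditions, one for position and one for velocity, and this is where the codimension two enters. A stationary-phase analysis of the resulting oscillatory integrals then yields the $t^{-\frac32}$ and $t^{-\frac52}$ rates for $\bw$ and $\partial_t \bw$ required in \eqref{gboot}. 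The ODE \eqref{budef} for $\bu$ is inverted by inward integration from $r = \infty$ using the integrating factor $h_1 = \sin Q$ for the linearization $Lv = z$; the nonlinear error of the form $O(v^2/r)$ is absorbed by a routine contraction in the $X$ norm.

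Since $\bw$ solves the homogeneous equation, the equation for the correction $\tw$ becomes $(\partial_t^2 + \tilde H)\tw = N(\bu + \tu, \bw + \tw)$ with $N$ as in \eqref{w1}, and I would solve it in Duhamel form with vanishing data at $t = +\infty$:
\[
\tw(t) = -\int_t^\infty \frac{\sin\bigl((s-t)\sqrt{\tilde H}\bigr)}{\sqrt{\tilde H}}\, N(\bu+\tu,\bw+\tw)(s)\,ds.
\]
This is coupled to an ODE for $\tu$ derived by subtracting \eqref{budef} for $u$ and for $\bu$. The resulting map acts on the Banach space of pairs $(\tu,\tw)$ with the norms in \eqref{gboot}--\eqref{eboot}, and the contraction is closed by showing that $N$, schematically $(\bu+\tu - Q)\,w/r^2 + \sin u\,(u_t^2 - w^2)/r$, produces a source decay of at least $s^{-\frac52}$ in $LX$. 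The extra power beyond the pilot's $s^{-\frac32}$ comes from the quadratic structure of $N$, the $r$-weights, and the bootstrap decay assumed on $(\tu,\tw)$. Uniqueness and Lipschitz dependence follow immediately from the same estimates.

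The main obstacle is the $LX$-mapping property of the Duhamel propagator associated with $\partial_t^2 + \tilde H$. A generic forcing is not orthogonal to the zero resonance $\psi_0$, so the crude dispersive bound leaves a $\psi_0$-component that grows in time and destroys the $t^{-\frac32}$ bootstrap. Controlling this requires both the fine spectral theory of $\tilde H$ near zero energy to be built in Section~\ref{s:ft} and a careful use of the algebraic link \eqref{wdef} between $u$ and $w$ in order to extract a cancellation in the $\psi_0$-projection of $N$. This tight coupling between the $\tilde H$-wave equation for $\tw$ and the nonlinear ODE recovering $\tu$ is the technical heart of the argument.
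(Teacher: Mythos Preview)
Your overall framework—backward Duhamel contraction for $\tw$ on $[T,\infty)$ coupled to the ODE recovering $\tu$, with $(\bw,\bu)$ as pilot—matches the paper. Two points, however, are off, and the second is a genuine gap.

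First, your decay bookkeeping is short by one power. The paper's Duhamel bound (Lemma~\ref{l:Kest}) uses only $|\sin((t-s)\xi)/\xi|\le|t-s|$; to close \eqref{gboot} this forces $\|N(s)\|_{LX}\lesss s^{-7/2}$, not $s^{-5/2}$. Relatedly, $\bw$ is a free $\tilde H$-wave and does not decay in $LX$; the rates in \eqref{gboot} pertain to $\tw$, not $\bw$. What the pilot supplies is the pointwise bound \eqref{bwpoint}, which is only $\sim t^{-1/2}$ on the cone.

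Second, you have misidentified the main obstacle. The Duhamel estimate in the paper requires no orthogonality of the forcing to $\psi_0$, and there is no growing $\psi_0$-component to cancel; the $LX$ framework (via $L^1\cap L^2\subset LX$) already handles low $\tilde H$-frequencies. The actual heart of the argument is elsewhere: the inhomogeneous term $N(\bw,\bu)$, estimated directly near the light cone, misses the required $t^{-7/2}$ by two full powers. Recovering them needs (i) a \emph{double} cancellation for the pilot on the cone, $|(\partial_r+\partial_t)\bu^l+\tfrac1{2r}\bu^l|\lesss t^{-5/2}\la r-t\ra^{-1/2}$ (see \eqref{ulrt}), which is not generic but comes from the exact $r^{-2}$ falloff of the potential in $\tilde H$; and (ii) an algebraic reduction showing that near the cone $N(\bw,\bu)=L\bigl(\tfrac1{2r^3}(\bu^l)^2\bigr)$ modulo acceptable errors, so the leading piece is bounded via $H^1_e\subset X$ rather than $L^1\cap L^2$. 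A parallel, equally delicate cancellation is needed for $\partial_t\bu^{nl}$ (second line of \eqref{unl}). None of this is about the $\psi_0$-projection of $N$; the nonresonance conditions \eqref{nonres} act only on the data $(w_0,w_1)$, producing the extra vanishing in \eqref{f01} that feeds into the pointwise bounds for the pilot.
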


One would expect the above result to be in terms of $L^2$ and $\dHe$
spaces.  However these spaces are very disconnected from the spectral
structure of $H$ and $\tilde H$, particularly at low frequencies, and
this makes them unsuitable. The spaces $X \subset \dHe$ and $LX
\subset L^2$ have been introduced in \cite{BT} to address exactly this
issue: they are low frequency corrections of $\dHe$, respectively
$L^2$. Their exact definition is provided in the next section.

In view of the equation \eqref{w1}, the function $\gamma$ solves
\begin{equation} \label{geq}
(\partial_t^2 + \tilde H) \gamma =N(\bw + \tw,\bu+ \tu)
\end{equation}
with zero Cauchy data at infinity.
By \eqref{wdef}, \eqref{egdef} and \eqref{budef}, the function $\epsilon$ is determined 
from the equation
\begin{equation} \label{ge}
\tw = \partial_r \tu  - \frac{\sin(\tu + \bu) - \sin \bu}r 
\end{equation}

We proceed as follows. In the next section we recall from \cite{BT} the spectral theory for $H$ (which in fact originates
in \cite{KST}) and $\tilde H$
and the definitions and some properties of the spaces $X$ and $LX$. Then, in Section \ref{s:lin}
we provide linear estimates for the linear (inhomogenous) wave equation corresponding to \eqref{bw}. 
In Section \ref{s:bubw} we analyze the first approximations $\bw$ and $\bu$ using \eqref{budef}.
Then, in Section \ref{s:transition}, we continue with the study of
the relation between $\epsilon$ and $\ga$ based on the the equation \eqref{ge}. 
All the analysis carried in Sections \ref{s:lin}-\ref{s:transition} is done in the context of $X$ and $LX$ spaces.
In the end, in Section \ref{s:nonl} we study the solvability of equation \eqref{geq} using perturbative methods 
in $LX$ based spaces.

\section{The modified Fourier transform}
\label{s:ft}
In this section we recall the spectral theory associated with the
operators $H, \tilde H$.  The spectral theory for $H$ was developed in
\cite{KST} and the one for $\tilde H$ was derived from the one for $H$
in \cite{BT}. In this paper, we follow closely the exposition in
\cite{BT}. 

\subsection{Generalized eigenfunctions}
We consider $H$ acting as an unbounded selfadjoint operator
in $L^2(rdr)$. Then $H$ is nonnegative, and its spectrum $[0,\infty)$
is absolutely continuous. $H$ has a zero resonance, namely
$\phi_0=h_1$,
\[
 H h_1 = 0.
\]
For each $\xi > 0$ one can choose a normalized generalized eigenfunction
$\phi_\xi$,
\[
 H \phi_\xi = \xi^2 \phi_\xi. 
\]
These are unique up to a $\xi$ dependent multiplicative factor,
which is chosen as described below.

To these one associates a generalized Fourier transform $\FH$ defined by
\[
 \FH{f}(\xi)=\int_0^\infty \phi_\xi(r) f(r)  rdr
\]
where the integral above is considered in the singular sense.
This is an $L^2$ isometry, and we have the inversion formula
\[
 f(r) = \int_0^\infty \phi_\xi(r) \FH{f}(\xi)  d\xi 
\]
The functions $\phi_\xi$ are smooth with respect to both $r$ and $\xi$.
To describe them one considers two distinct regions, $r \xi \lesssim 1$ 
and $r \xi \gtrsim  1$.

In the first region $r\xi \lesssim 1$ the functions $\phi_\xi$
admit a power series expansion of the form
\begin{equation} \label{repphi}
\phi_\xi (r)= q(\xi) \left( \phi_0 + \frac{1}{r} \sum_{j=1}^\infty (r\xi)^{2j}
\phi_j(r^2)\right), \qquad r\xi \lesssim 1
\end{equation}
where $\phi_0=h_1$ and the functions $\phi_j$ are analytic and satisfy 
\begin{equation} \label{derphi}
|(r \partial_r)^\alpha \phi_j| \lesssim_\alpha \frac{C^j}{(j-1)!} \log{(1+r)}
\end{equation} 
This bound is not
spelled out in \cite{KST}, but it follows directly from the integral
recurrence formula for $f_j$'s (page 578 in the paper).
The smooth positive weight $q$ satisfies 
\begin{equation}\label{qest}
 q(\xi) \approx \left\{ \begin{array}{ll}
\displaystyle \frac{1}{\xi^\frac12 |\log \xi|},  &  \xi \ll 1 \cr\cr
\xi^{\frac32},   &  \xi \gg 1
                        \end{array} \right., \qquad
|(\xi \partial_\xi)^\alpha q| \lesssim_\alpha q
\end{equation}
Defining the weight 
\begin{equation}\label{defmk1}
 m_k^1(r)=
\left\{
\begin{array}{ll}
 \min\{1, r 2^{k} \dfrac{\ln{(1+r^2)}}{\langle k \rangle}\},  & \ k < 0 \\ \\
 \min\{1, r^3 2^{3k}\},  & \ k \geq 0
\end{array}
\right.
\end{equation}
it follows that the nonresonant part of $\phi_\xi$ satisfies
\begin{equation}\label{pointphilow}
|(\xi \partial_\xi)^\alpha (r \partial_r)^\beta \left( \phi_\xi(r) - q(\xi) \phi_0(r)\right)|
\lesssim_{\alpha\beta} 2^{\frac{k}2} m_k^1(r), \qquad \xi \approx 2^k,\  r\xi \lesssim 1 
\end{equation}

In the other region $r \xi \gtrsim  1$  we begin with the functions
\begin{equation} \label{repphi+}
\phi^{+}_\xi(r)= r^{-\frac12} e^{ir\xi} \sigma(r\xi,r), 
\qquad r\xi \gtrsim  1
\end{equation}
solving 
\[
H \phi^{+}_\xi = \xi^2 \phi^+_\xi
\]
where for $\sigma$ we have the following asymptotic expansion
\[
\sigma(q,r) \approx \sum_{j=0}^\infty q^{-j} \phi^{+}_j(r), 
\qquad \phi_0^{+}=1 , \qquad \phi_1^{+}=\frac{3i}{8} + O(\frac1{1+r^2})
\]
with
\[
\sup_{r > 0} |(r\partial_r)^k \phi^{+}_j| < \infty
\]
in the following sense
\[
\sup_{r > 0} | (r \partial r)^\alpha (q \partial_q)^\beta [\sigma(q,r)-\sum_{j=0}^{j_0}
q^{-j} \phi^{+}_{j}(r) ] | \leq c_{\alpha,\beta,j_0} q^{-j_0-1}
\]
Then we have the representation
\begin{equation} \label{phipsi}
\phi_{\xi}(r)=a(\xi) \phi^{+}_\xi(r) + \overline{a(\xi) \phi^{+}_\xi(r)}
\end{equation}
where the complex valued function $a$ satisfies
\begin{equation} \label{abound}
|a(\xi)| = \sqrt{\frac2{\pi}}, \qquad | (\xi \partial_\xi)^\alpha a(\xi)| \lesssim_\alpha 1
\end{equation}

The spectral theory for $\tilde H$ is derived from the spectral theory 
for $H$ due to the conjugate representations
\[
 H = L^* L, \qquad \tilde H = L L^*
\]
This allows us to define generalized eigenfunctions $\psi_\xi$ for
$\tilde H$ using the generalized eigenfunctions $\phi_\xi$ for
$ H$,
\begin{equation} \label{phipsirel}
 \psi_\xi = \xi^{-1} L \phi_\xi, \qquad L^* \psi_\xi = \xi \phi_\xi
\end{equation}
It is easy to see that $\psi_\xi$ are real, smooth, vanish at $r = 0$
and solve
\[
 \tilde H \psi_\xi = \xi^2 \psi_\xi
\]
With respect to this frame we can define the generalized Fourier transform
adapted to $\tilde H$ by 
\[
 \FtH{f}(\xi)=\int_0^\infty \psi_\xi(r) f(r)  rdr
\]
where the integral above is considered in the singular sense.
This is an $L^2$ isometry, and we have the inversion formula
\begin{equation} \label{FTL0}
 f(r) = \int_0^\infty \psi_\xi(r) \FtH{f}(\xi)  d\xi 
\end{equation}
To see this we compute, for a Schwartz function $f$:  
\[
\begin{split}
 \FtH{Lf}(\xi) & =\! \int_0^\infty  \psi_\xi(r) L f(r)  rdr
= \!\int_0^\infty L^* \psi_\xi(r)  f(r)  rdr
\\ &= \!\int_0^\infty \xi \phi_\xi(r)  f(r)  rdr = \xi \FH{f}(\xi) 
\end{split}
\]
Hence
\[
 \| \FtH{Lf}\|_{L^2}^2 = \| \xi \FH{f}(\xi)\|_{L^2}^2 = \la H f,f\ra_{L^2(rdr)}
= \|Lf\|_{L^2}^2
\]
which suffices since $Lf$ spans a dense subset of $L^2$.

The representation of $\psi_\xi$ in the two regions $r\xi \lesssim 1$
and $r\xi \gtrsim 1$ is obtained from the similar representation of $\phi_\xi$. 
In the first region $r\xi \lesssim 1$ the functions $\psi_\xi$
admit a power series expansion of the form
\begin{equation}\label{psirepa}
\psi_\xi = \xi q(\xi) \left(\psi_0(r) +   \sum_{j \geq 1} (r\xi)^{2j} 
{\psi}_j(r^2)\right)
\end{equation}
where
\[
{\psi}_j(r)= ( h_3+1 +2j) \phi_{j+1}(r) + r \partial_r \phi_{j+1}(r)
\]
From \eqref{derphi}, it follows that 
\[
 |(r \partial_r)^\alpha \psi_j| \lesssim_\alpha \frac{C^j}{(j-1)!} \log{(1+r^2)}
\]
In addition, $\psi_0$ solves $L^*\psi_0 = \phi_0$  therefore
a direct computation shows that
\[
 \psi_0 = \frac1{2} \left(\frac{(1+r^2)\log(1+r^2)}{r^2}-1 \right)
\]

In particular, defining the weights 
\begin{equation}\label{defmk}
m_k(r)=
\left\{
\begin{array}{ll}
\min\{1, \dfrac{\ln{(1+r^2)}}{\langle k \rangle}\}, & k < 0 \\ \\
\min\{1, r^2 2^{2k}\}, & k \geq 0
\end{array}
\right.
\end{equation}
we have the pointwise bound for $\psi_\xi$ 
\begin{equation} \label{pointtp}
| (r \partial_r)^\alpha (\xi \partial_\xi)^\beta \psi_{\xi}(r) | \lesssim_{\alpha\beta}
2^{\frac{k}2} m_k(r) , \qquad \xi \approx 2^k,\  r\xi \lesssim 1 
\end{equation}

On the other hand in the regime $r \xi \gtrsim  1$ we define 
\[
 \psi^+ = \xi^{-1} L\phi^+ 
\]
and we obtain the representation
\begin{equation} \label{psirep}
\psi_{\xi}(r)=a(\xi) \psi^{+}_\xi(r) + \overline{a(\xi) \psi^{+}_\xi(r)}
\end{equation}
For $\psi^+$ we obtain the expression
\begin{equation} \label{reppsi}
\psi^{+}_\xi(r)= r^{-\frac12} e^{ir\xi} \tilde\sigma(r\xi,r), 
\qquad r\xi \gtrsim  1
\end{equation}
where $\tilde \sigma$ has the form
\[
\tilde\sigma(q,r)  = i \sigma(q,r) -\frac12 q^{-1} \sigma(q,r)
+ \frac{\partial}{\partial q} \sigma(q,r)+ \xi^{-1} L \sigma(q,r) 
\]
therefore it has exactly the same properties as $\sigma$. In particular,
for fixed $\xi$, we obtain that
\begin{equation}
\tilde{\sigma}(r\xi,r) = i -\frac78  r^{-1}\xi^{-1} + O(r^{-2})
\end{equation}

We conclude our description of the generalized eigenfunctions and of
the associated Fourier transforms with a bound on the $\tilde H$ Fourier 
transforms of Schwartz functions.

\begin{lemma} If $f$ is a Schwartz function satisfying $\la f, \psi_0 \ra=0$ then
\begin{equation} \label{nonresbound}
|(\xi \partial_\xi)^\alpha \FtH f(\xi)| \les_{\alpha,N}
\left\{ 
\begin{array}{ll}
 \frac{\xi^\frac52}{\la \log \xi\ra}, \quad  & \xi \les 1 \cr
\la \xi \ra^{-N}, \quad &  \xi \ges 1
\end{array}
\right.
\end{equation}
\end{lemma}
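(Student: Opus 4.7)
The proof naturally splits into the low- and high-frequency regimes.

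For the high-frequency bound ($\xi \gtrsim 1$), the plan is to exploit the eigenfunction equation $\tilde H \psi_\xi = \xi^2\psi_\xi$ and the self-adjointness of $\tilde H$. Integration by parts (with boundary terms vanishing since $f$ is Schwartz and $\psi_\xi$ has the right behavior at $r=0$) yields $\xi^{2} \FtH f(\xi) = \FtH(\tilde H f)(\xi)$, and iterating gives $\xi^{2N}\FtH f(\xi) = \FtH(\tilde H^N f)(\xi)$. It remains to show that $\FtH g$ is uniformly bounded for any Schwartz $g$, which follows by splitting at $r\xi \sim 1$ and using the pointwise bound \eqref{pointtp} on $r\xi\lesssim1$ together with the oscillatory representation \eqref{psirep}-\eqref{reppsi} on $r\xi\gtrsim1$. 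For the $(\xi\partial_\xi)^\alpha$ derivatives, one differentiates under the integral; the derivatives fall on $\psi_\xi$ where they are controlled by \eqref{pointtp} and by the smoothness of $\tilde\sigma(r\xi,r)$ in its first argument.

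For the low-frequency bound ($\xi \lesssim 1$) one introduces a smooth cutoff $\chi(r\xi)$ adapted to the transition at $r\xi = 1$ and writes $\FtH f(\xi) = I_1(\xi) + I_2(\xi)$ with $I_1$ the contribution of $r\xi \lesssim 1$ and $I_2$ that of $r\xi \gtrsim 1$. For $I_2$, I would use the bound $|\psi_\xi(r)|\lesssim r^{-1/2}$ from \eqref{psirep}-\eqref{reppsi} together with the Schwartz decay of $f$; since the integration is over $r\gtrsim \xi^{-1}$, this gives $|I_2(\xi)|\lesssim_N \xi^N$ for any $N$. For $I_1$, I substitute the power series \eqref{psirepa} termwise:
\begin{equation}\nonumber
I_1(\xi) = \xi q(\xi)\int_0^\infty \chi(r\xi)\psi_0(r) f(r)\,rdr + \xi q(\xi)\sum_{j\geq 1}\xi^{2j}\int_0^\infty \chi(r\xi) r^{2j}\psi_j(r^2) f(r)\,rdr.
\end{equation}
In the leading term I use the nonresonance condition $\langle f,\psi_0\rangle=0$ to rewrite the $\chi(r\xi)$ integral as $-\int_0^\infty(1-\chi(r\xi))\psi_0(r)f(r)\,rdr$; since $\psi_0$ grows only logarithmically and $f$ is Schwartz, this produces an $O_N(\xi^N)$ contribution. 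The $j=1$ term is the leading one: extending the integral to all of $\R_+$ at the cost of another rapidly decaying remainder and using $|\psi_1(r^2)|\lesssim \log(1+r)$ from \eqref{derphi}, the integral is a bounded constant depending on $f$, so this term contributes $\xi^{3}q(\xi)\approx \xi^{5/2}/|\log\xi|$ in view of \eqref{qest}. The factorial decay in \eqref{derphi} ensures the sum over $j\geq 2$ converges absolutely and produces even smaller contributions $O(\xi^{5/2+2(j-1)}/|\log\xi|)$.

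For the derivatives $(\xi\partial_\xi)^\alpha$, each derivative either hits $q(\xi)$ (controlled by \eqref{qest}), hits a power of $\xi$ (stays at the same size up to constants), or falls on $\chi(r\xi)$, localizing to $r\xi\sim 1$ where both representations of $\psi_\xi$ agree and the estimate proceeds as before with minor bookkeeping. The main technical obstacle I anticipate is the bookkeeping at the transition $r\xi \sim 1$ when $\xi$-derivatives fall on the cutoff $\chi(r\xi)$: one must verify that the two representations \eqref{psirepa} and \eqref{psirep}-\eqref{reppsi} produce matching, uniformly controlled contributions, and that the $\log \xi$ factor survives cleanly at the top of the range. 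Once this matching is in hand, the rest of the argument is a sum of absolutely convergent integrals.
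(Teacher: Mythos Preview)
Your proposal is correct and follows essentially the same route as the paper: split at $r\xi\sim 1$, use the power series \eqref{psirepa} on the inner region, cancel the $\psi_0$ term via the nonresonance condition, and read off the leading $\xi^3 q(\xi)\approx \xi^{5/2}/|\log\xi|$ contribution from the $j=1$ term, while the outer region is handled by $|\psi_\xi|\lesssim r^{-1/2}$ and the Schwartz decay of $f$. The paper's written proof is three lines and treats only $\alpha=0$, $\xi\lesssim 1$ explicitly; your version is more detailed in that you use a smooth cutoff, supply the high-frequency argument via $\xi^{2N}\FtH f=\FtH(\tilde H^N f)$, and sketch how the $(\xi\partial_\xi)^\alpha$ derivatives are handled---none of which is a departure in method, just in level of detail. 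One small caution on the high-frequency step: for $\tilde H^N f$ to remain well-behaved at $r=0$ you are implicitly using that ``Schwartz'' here means the radial part of a $2$-equivariant Schwartz function on $\R^2$ (i.e., $f=r^2 h(r^2)$ with $h$ smooth and rapidly decaying), which is indeed preserved by $\tilde H$; this is the intended meaning in the paper but is worth stating.
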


\begin{proof} We start from the definition of modified Fourier transform and use that $\la f, \psi_0 \ra=0$
\[
\begin{split}
| \FtH f(\xi)| & \les \left( |\int_0^{\xi^{-1}} \psi_\xi(r) f(r) r dr | + |\int_{\xi^{-1}}^\infty \psi_\xi(r) f(r) r dr | \right) \\
& \les \xi q(\xi) \left( \int_{\xi^{-1}}^\infty |\psi_0(r) f(r)| rdr + \int_0^{\xi^{-1}} \sum_{j \geq 1} (r \xi)^{2j} \psi_j(r^2) f(r) rdr\right) + 
\int_{\xi^{-1}}^\infty |f(r)| r^\frac12 dr  \\
& \les \xi^3 q(\xi)
\end{split}
\]
A similar argument takes care of the case $\alpha > 0$. 

\end{proof}

\subsection{The spaces $X$ and $LX$}
The operator $L$ maps $\dHe$ into $L^2$. Conversely one would like that, given some $f \in L^2$, we could solve 
$Lu = f$ and we obtain a solution $u$ which is in $\dHe$ and satisfies
\[
\| u \|_{\dHe} \lesssim \|f\|_{L^2}
\]
However, this is not the case. The first observation is that
the solution is only unique modulo a multiple of the
resonance $\phi_0$. Moreover the inequality above is not expected to be true, even
assuming that somehow we choose the "best" $u$ from all candidates.

The spaces $X$ and $LX$ are in part introduced in order to remedy both the 
ambiguity in the inversion of $L$ and the failing inequality.

\begin{defi} 
a) The space $X$ is defined as the completion of the subspace 
of $L^2(rdr)$ for which the following norm is finite  
\[
\| u \|_{X} = \left( \sum_{k \geq 0} 2^{2k} \| P_k^H u \|_{L^2}^2 \right)^\frac12
+ \sum_{k < 0} \frac1{|k|} \| P_k^H u \|_{L^2}
\]
where $P^H_k$ is the  Littlewood-Paley operator localizing at
frequency $\xi \approx 2^k$ in the $H$ calculus.

b) $L X$ is the space of functions of the form $f=L
u$ with $u \in X$, with norm $\| f \|_{L X}= \| u \|_{X}$.
 Expressed in the $\tilde H$ calculus, the $LX$ norm is written as
\[
\| f \|_{LX} = \left( \sum_{k \geq 0}  \| P_k^{\tilde H} f \|_{L^2}^2
\right)^\frac12 + \sum_{k < 0} \frac{2^{-k}}{|k|} \| P_k^{\tilde H} f \|_{L^2}
\]
\end{defi}

In this article we work with equivariant wave maps $u$ for which $ \| u - Q\|_{X} \ll 1$. 
This corresponds to functions $w$ which satisfy $ \| w \|_{LX} \ll 1$.  
The simplest properties of the space
$X$ are summarized as follows, see Proposition 4.2 in \cite{BT}:

\begin{prop}
The following embeddings hold for the space $X$:   
\begin{equation} \label{Xembt}
\He \subset  X \subset \dHe
\end{equation}
In addition for $f$ in $X$ we have the following bounds:
\begin{equation} \label{pointX}
\|\langle r \rangle^\frac12 f \|_{L^\infty} \lesssim  \| f \|_{X} 
\end{equation}
\begin{equation} \label{linX}
\left\| \frac{f}{\ln (1+r)}  \right\|_{L^2} \lesssim  \| f \|_{X}
\end{equation}
\begin{equation} \label{linX4}
\left\| \langle r \rangle^\frac12{f} \right\|_{L^4} \lesssim  \| f \|_{X}
\end{equation}

\end{prop}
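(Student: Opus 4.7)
The plan is to prove the four statements by exploiting the modified Fourier analysis adapted to $H$ from Section \ref{s:ft}, decomposing $f = \sum_{k \in \Z} P_k^H f$ and then combining the pointwise bounds on the eigenfunctions $\phi_\xi$ in the two regimes $r\xi \lesssim 1$ and $r\xi \gtrsim 1$ with the tailored $X$ norm.

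For the embeddings \eqref{Xembt}, only the low frequencies require work, since at high frequencies both $\|f\|_X$ and $\|f\|_{\dHe}$ reduce to $\bigl(\sum_{k\geq 0} 2^{2k}\|P_k^H f\|_{L^2}^2\bigr)^{1/2}$. For $X \subset \dHe$, each low-frequency summand of $\|f\|_X$ gives $\|P_k^H f\|_{L^2} \leq |k|\|f\|_X$, whence $\sum_{k<0} 2^{2k} \|P_k^H f\|_{L^2}^2 \leq \|f\|_X^2 \sum_{k<0} 2^{2k} k^2 \lesssim \|f\|_X^2$. For $\He \subset X$, a single Cauchy--Schwarz yields $\sum_{k<0} |k|^{-1} \|P_k^H f\|_{L^2} \leq \bigl(\sum_{k<0} k^{-2}\bigr)^{1/2} \|f\|_{L^2} \lesssim \|f\|_{\He}$.

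For the pointwise bound \eqref{pointX}, the plan is to bound each $P_k^H f(r)$ by splitting the $\xi$-integration at $\xi \sim 1/r$. In the oscillatory regime $\xi r \gtrsim 1$ use \eqref{repphi+} to get $|\phi_\xi(r)| \lesssim r^{-1/2}$ and hence $|P_k^H f(r)| \lesssim r^{-1/2} 2^{k/2} \|P_k^H f\|_{L^2}$. In the elliptic regime $\xi r \lesssim 1$ use the power series \eqref{repphi}: the dominant piece is the resonance term $q(\xi)\phi_0(r)$, and the nonresonant remainder is controlled via \eqref{pointphilow}. The key matching is that $q(2^k) \approx 2^{-k/2}/|k|$ for $k<0$, and its factor $|k|^{-1}$ pairs exactly with the weight $1/|k|$ in the low-frequency $X$ norm, producing a total resonance contribution of size $|\phi_0(r)| \|f\|_X \lesssim \langle r\rangle^{-1}\|f\|_X$. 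Summing in $k$, with the split about the threshold $k_\ast \approx -\log r$ and Cauchy--Schwarz applied to the high-frequency $\ell^2$ side of $X$, gives $\langle r\rangle^{1/2}|f(r)| \lesssim \|f\|_X$.

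For \eqref{linX}, argue by dyadic decomposition in $r$: near the origin $\log(1+r) \sim r$, so the bound reduces to the equivariant Hardy inequality $\|f/r\|_{L^2} \lesssim \|f\|_{\dHe}$, available via \eqref{Xembt}. On an annulus $A_m$ with $m$ large, combine the eigenfunction bounds on $P_k^H f$ restricted to $A_m$ with the $X$ structure; the logarithmic weight $\log^{-2}(1+r) \sim m^{-2}$ is precisely what converts the $\ell^1$ low-frequency structure of $X$ into an $\ell^2$-summable quantity after summing over $m$. The estimate \eqref{linX4} is the most delicate: here one would use annular Bernstein-type inequalities of the form $\|\langle r\rangle^{1/2} P_k^H f\|_{L^4(A_m)} \lesssim 2^{k/2}\|P_k^H f\|_{L^2(A_m)}$ (deduced from the explicit form of $\phi_\xi$ and oscillation/stationary phase for $r\xi \gtrsim 1$), and then sum over $k$ and $m$, using the pointwise control from \eqref{pointX} and the weighted $L^2$ control from \eqref{linX} as auxiliary ingredients.

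The main obstacle is the pointwise bound \eqref{pointX}: that is where the architecture of the $X$ norm has to be matched exactly against the spectral behavior of $H$, in particular against the asymptotics $q(\xi) \sim \xi^{-1/2}|\log \xi|^{-1}$ as $\xi \to 0$, which is precisely the reason the logarithmic weight $1/|k|$ is built into the low-frequency part of the $X$ norm in the first place.
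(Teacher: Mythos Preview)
The paper itself does not prove this proposition: immediately before the statement it reads ``see Proposition~4.2 in \cite{BT}'', and no argument is given here. So there is no in-paper proof to compare against; your outline is being measured against the general strategy of \cite{BT}, which is indeed built on the eigenfunction asymptotics \eqref{repphi}--\eqref{pointphilow}, \eqref{phipsi}--\eqref{repphi+} and the low-frequency weight $q(\xi)\sim \xi^{-1/2}|\log\xi|^{-1}$, exactly as you describe. In that sense your plan for \eqref{pointX}, \eqref{linX}, \eqref{linX4} is consistent with the intended approach.

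There is, however, a genuine gap in your treatment of $X\subset\dHe$. You assert that ``at high frequencies both $\|f\|_X$ and $\|f\|_{\dHe}$ reduce to $(\sum_{k\geq 0}2^{2k}\|P_k^Hf\|_{L^2}^2)^{1/2}$'' and then for low frequencies bound $\sum_{k<0}2^{2k}\|P_k^Hf\|_{L^2}^2$. But $\sum_k 2^{2k}\|P_k^Hf\|_{L^2}^2\approx \langle Hf,f\rangle=\|Lf\|_{L^2}^2$, and this is \emph{not} equivalent to $\|f\|_{\dHe}^2$: the resonance $\phi_0=h_1$ lies in $\dHe$ yet is annihilated by $L$. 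Concretely, one computes
\[
\|f\|_{\dHe}^2 = \langle Hf,f\rangle + 8\int_0^\infty \frac{f^2}{(1+r^2)^2}\,r\,dr,
\]
so the spectral sum misses the second term. To close the embedding you must control this extra piece separately, e.g.\ via the pointwise bound \eqref{pointX} (which you do prove), or equivalently by establishing $\|P_k^Hf/r\|_{L^2}\lesssim |k|^{-1}\|P_k^Hf\|_{L^2}$ for $k<0$ from the eigenfunction bounds and summing in $\ell^1$; either route matches the $\ell^1$ low-frequency structure of $X$ exactly. This is the same mechanism the paper exploits in the proof of Lemma~\ref{l:Kest} when bounding $\|\psi/r\|_{L^2}$. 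Once this is inserted, your outline is sound.
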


Now we turn our attention to the space $LX$. From \cite{BT}, Lemma 4.4 and Proposition 4.5, we have 
\begin{lemma} If $f \in L^2$ is localized at $\tilde{H}$- frequency $2^k$ then
\begin{equation} \label{ps0}
|  f(r) | \lesssim 2^k m_k(r)(1+2^k r)^{-\frac12}  \| f \|_{L^2}
\end{equation}

\end{lemma}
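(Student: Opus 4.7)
The plan is to use the modified Fourier inversion formula adapted to $\tilde H$,
\[
f(r) = \int_0^\infty \psi_\xi(r) \, \FtH f(\xi)\, d\xi,
\]
which, together with the frequency localization of $f$ at scale $2^k$, restricts the integral to $\xi \approx 2^k$. By Cauchy--Schwarz in $\xi$ followed by the $L^2$ isometry property of $\FtH$, one obtains
\[
|f(r)| \lesssim \Bigl(\int_{\xi\approx 2^k} |\psi_\xi(r)|^2\, d\xi\Bigr)^{\frac12} \|f\|_{L^2}
\lesssim 2^{\frac{k}{2}} \sup_{\xi \approx 2^k} |\psi_\xi(r)|\, \|f\|_{L^2}.
\]
So the entire argument reduces to an adequate pointwise bound on $\psi_\xi(r)$ at fixed $\xi \approx 2^k$, in the two regimes $r\xi \lesssim 1$ and $r\xi \gtrsim 1$.

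In the low regime $r\xi \lesssim 1$, the needed pointwise bound $|\psi_\xi(r)| \lesssim 2^{k/2} m_k(r)$ is precisely \eqref{pointtp}, which the authors derive from the power series expansion \eqref{psirepa}. Plugging this in yields $|f(r)| \lesssim 2^k m_k(r)\|f\|_{L^2}$, and since $(1+2^k r)^{-1/2} \approx 1$ in this regime, this matches the asserted bound.

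In the high regime $r\xi \gtrsim 1$, the representations \eqref{psirep} and \eqref{reppsi}, together with the boundedness of $\tilde\sigma$, give $|\psi_\xi(r)| \lesssim r^{-1/2}$; this combines with the Cauchy--Schwarz estimate to produce $|f(r)| \lesssim 2^{k/2} r^{-1/2} \|f\|_{L^2} \approx 2^k (1+2^k r)^{-1/2} \|f\|_{L^2}$. The remaining point is to verify that $m_k(r) \approx 1$ throughout this regime: for $k \geq 0$ this is immediate from $r^2 2^{2k} \gtrsim 1$, and for $k < 0$ we have $r \gtrsim 2^{|k|}$, so $\ln(1+r^2)/\langle k \rangle \gtrsim 1$. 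Thus the two regimes match up and combine to give the claimed global estimate.

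I do not expect any serious obstacle here — the proof is essentially Fourier inversion plus Cauchy--Schwarz, with all the technical work already done in establishing the pointwise expansions for $\psi_\xi$. The only minor care needed is to check that the two regimes glue consistently through the transition $r\xi \sim 1$, which is the verification in the previous paragraph that $m_k(r) \approx 1$ when $r \cdot 2^k \gtrsim 1$.
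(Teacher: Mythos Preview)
Your argument is correct: Fourier inversion, Cauchy--Schwarz in $\xi$, and the pointwise bounds \eqref{pointtp} and \eqref{psirep}--\eqref{reppsi} on $\psi_\xi$ in the two regimes give exactly the stated estimate, and your check that $m_k(r)\approx 1$ when $2^k r\gtrsim 1$ is the right way to glue the regimes. The paper does not actually supply a proof here but simply quotes the result from \cite{BT}; your approach is the standard one (and is precisely the computation the paper itself carries out later in the proof of Lemma~\ref{l:Kest} when bounding $\psi_k$).
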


\begin{proposition}
 The following embeddings hold for $LX$:
\begin{equation}
L^1 \cap L^2 \subset LX \subset L^2
\label{LXemb}\end{equation}
\end{proposition}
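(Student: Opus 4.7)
The plan is to prove the two inclusions separately using the $\tilde H$-Fourier calculus together with the pointwise bounds on the generalized eigenfunctions $\psi_\xi$ developed in Section~\ref{s:ft}. In both directions the starting point is the Plancherel identity $\|f\|_{L^2}^2 = \sum_k \|P_k^{\tilde H} f\|_{L^2}^2$, combined with the dyadic decomposition that splits naturally at $k = 0$.

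For the inclusion $LX \subset L^2$, the high-frequency piece $\sum_{k \geq 0} \|P_k^{\tilde H} f\|_{L^2}^2$ is directly dominated by the first term of $\|f\|_{LX}^2$. For the low-frequency piece one observes that $\frac{2^{-k}}{|k|} \geq 1$ for all $k \leq -1$, so the elementary inequality $\sum a_k^2 \leq (\sum a_k)^2$ for nonnegative $a_k$ yields
\[
\sum_{k<0} \|P_k^{\tilde H} f\|_{L^2}^2 \leq \Bigl(\sum_{k<0} \frac{2^{-k}}{|k|} \|P_k^{\tilde H} f\|_{L^2}\Bigr)^{\!2} \leq \|f\|_{LX}^2.
\]
Adding the two pieces gives $\|f\|_{L^2} \les \|f\|_{LX}$.

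For the inclusion $L^1 \cap L^2 \subset LX$, the high-frequency piece follows at once from orthogonality, $\bigl(\sum_{k\geq 0} \|P_k^{\tilde H} f\|_{L^2}^2\bigr)^{1/2} \leq \|f\|_{L^2}$. For the low-frequency piece I would estimate each $\|P_k^{\tilde H} f\|_{L^2}$ by applying Minkowski's inequality to $\FtH f(\xi) = \int_0^\infty \psi_\xi(r) f(r) r\, dr$ and using the pointwise bound $|\psi_\xi(r)| \les 2^{k/2} m_k(r)(1+2^k r)^{-1/2}$ extracted from \eqref{pointtp}. This produces the two natural estimates $\|P_k^{\tilde H} f\|_{L^2} \les 2^k \|f\|_{L^1}$ and $\|P_k^{\tilde H} f\|_{L^2} \leq \|f\|_{L^2}$, which I would combine via a dyadic split of $\sum_{k<0} \frac{2^{-k}}{|k|} \|P_k^{\tilde H} f\|_{L^2}$ at the critical scale $2^k \sim \|f\|_{L^2}/\|f\|_{L^1}$, using the $L^2$ bound for $k$ above the split and the $L^1$ bound below.

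The hard part will be that the two naive estimates alone are not sufficient: using the $L^1$ bound uniformly in $k<0$ produces the divergent harmonic series $\sum_{k<0} 1/|k|$. To close the argument I would exploit the finer asymptotic $\psi_\xi(r) \approx \xi q(\xi)\psi_0(r)$ valid on $r\les 1/\xi$ from \eqref{psirepa}, in which the spectral weight $q(\xi) \approx \xi^{-1/2}/|\log \xi|$ contributes an extra $1/|k|$ factor at $\xi \approx 2^k$. Paired with the $L^2$ estimate in the complementary range, this should deliver a convergent series dominated by $\sum 1/|k|^2 < \infty$ and yield the required bound $\|f\|_{LX} \les \|f\|_{L^1} + \|f\|_{L^2}$.
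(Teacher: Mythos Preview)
The paper does not prove this statement; it cites \cite{BT}, Proposition~4.5. So there is no paper proof to compare against, and I evaluate your argument on its own.

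Your proof of $LX\subset L^2$ is fine. For $L^1\cap L^2\subset LX$, the high-frequency part and your identification of the obstacle are correct, but the proposed fix in the last paragraph does not close. The hoped-for extra $1/|k|$ from $q(\xi)\approx \xi^{-1/2}/|\log\xi|$ is exactly cancelled by the growth of $\psi_0$: on the range $r\lesssim 2^{-k}$ one has $\psi_0(r)\lesssim \log(1+r^2)\lesssim |k|$, so the sharpened bound collapses to
\[
\bigl|\FtH(f\chi_{r\lesssim 2^{-k}})(\xi)\bigr|\ \lesssim\ \frac{2^{k/2}}{|k|}\int_0^{2^{-k}}\psi_0(r)|f(r)|\,r\,dr\ \lesssim\ \frac{2^{k/2}}{|k|}\cdot |k|\,\|f\|_{L^1}\ =\ 2^{k/2}\|f\|_{L^1},
\]
i.e.\ no improvement over the naive $L^1$ bound. (Concretely, a uniform estimate $\|P_k^{\tilde H}f\|_{L^2}\lesssim \tfrac{2^k}{|k|}\|f\|_{L^1\cap L^2}$ is false: take $f=2^{2k}\chi_{A_{-k}}$, for which $\|f\|_{L^1}\approx 1$, $\|f\|_{L^2}\approx 2^k$, but $\|P_k^{\tilde H}f\|_{L^2}\approx 2^k$.) The ``$L^2$ in the complementary range $r\gtrsim 2^{-k}$'' does not help either: $\psi_\xi\notin L^2(r\,dr)$ there, and the crude Plancherel bound $\|P_k^{\tilde H}(f\chi_{r\gtrsim 2^{-k}})\|_{L^2}\le\|f\|_{L^2}$ produces the divergent sum $\sum_{k<0}2^{-k}/|k|$.

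What does work is a dyadic decomposition in \emph{physical} space, $f=\sum_m f_m$ with $f_m$ supported on $A_m$. By duality with the pointwise bound \eqref{ps0} one gets
\[
\|P_k^{\tilde H}f_m\|_{L^2}\ \lesssim\ 2^k\,m_k(2^m)\,(1+2^{k+m})^{-1/2}\,\|f_m\|_{L^1}.
\]
Now sum in $k$ first, for fixed $m$: the weight $m_k(2^m)/|k|$ contributes $\sum_{k\le -m}\tfrac{\langle m^+\rangle}{|k|^2}\lesssim 1$ in the regime $k+m\le 0$, while the $(1+2^{k+m})^{-1/2}$ decay handles $-m<k<0$. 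This gives $\sum_{k<0}\tfrac{2^{-k}}{|k|}\|P_k^{\tilde H}f_m\|_{L^2}\lesssim\|f_m\|_{L^1}$ uniformly in $m$, and summing over $m$ yields $\|f\|_{L^1}$. The point is that the $1/|k|$ gain from $m_k$ is real only once the spatial scale of $f$ is fixed relative to $2^{-k}$; your argument tried to extract it uniformly and that is where it breaks.
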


\section {Linear estimates for the $\tilde H$ wave equation}
\label{s:lin}
In this section we prove estimates for the linear equation
\begin{equation} \label{lineq}
(\partial_t^2 + \tilde H) \psi = f
\end{equation}
with zero Cauchy data at infinity. The solution  is given by $\psi=Kf$, where
\[
K f(r,t)= - \FtH^{-1} \int_t^\infty \frac{\sin(t-s)\xi}{\xi} \FtH f(\xi,s) ds
\]
We also need its time derivative, which is given by
\[
\partial_t K f = - \FtH^{-1} \int_t^\infty \cos(t-s)\xi \cdot \FtH f(\xi,s) ds
\]
Finally we need the following formula, which follows from \eqref{phipsirel}
\[
L^* Kf = - \FH^{-1} \int_t^\infty \sin(t-s)\xi \cdot \FtH f(\xi,s) ds
\]
The following result is a modification of the standard energy estimate
for the wave equation:

\begin{lemma} 
\label{l:Kest}
Assume that $f(s) \in LX$. Then for every $\alpha > 0 $, the solution
of \eqref{lineq} with zero data at $\infty$ satisfies
\begin{equation} \label{Kest}
t^\alpha \| \psi(t) \|_{LX} + t^{\alpha+1} 
( \| \partial_t \psi(t) \|_{LX} + \| \psi(t) \|_{\dHe} ) \les  \sup_s s^{\alpha + 2} \| f(s) \|_{LX}
\end{equation}

\end{lemma}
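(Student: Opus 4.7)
The plan is to work band by band in the distorted Fourier frame associated to $\tilde H$, estimate each dyadic piece of $\psi$, $\partial_t\psi$, and $L^*\psi$ pointwise in $t$, and then reassemble into the $LX$ and $\dHe$ norms. Throughout write $M := \sup_s s^{\alpha+2}\| f(s)\|_{LX}$.

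\textbf{Per-frequency kernel bounds.} Applying $P_k^{\tilde H}$ to the representation of $Kf$ given before the lemma, Plancherel for $\FtH$, and the trivial multiplier estimates $\left|\tfrac{\sin((t-s)\xi)}{\xi}\right|\les \min(|t-s|, 2^{-k})$ and $|\cos((t-s)\xi)|\les 1$, valid on the support of $P_k^{\tilde H}$, yield
\[
\|P_k^{\tilde H}\psi(t)\|_{L^2} \les \int_t^\infty \min(|t-s|, 2^{-k})\|P_k^{\tilde H} f(s)\|_{L^2}\, ds, \quad \|P_k^{\tilde H}\partial_t\psi(t)\|_{L^2} \les \int_t^\infty \|P_k^{\tilde H} f(s)\|_{L^2}\, ds.
\]
For the $\dHe$ estimate I would use the identity $L^*Kf = -\FH^{-1}\int_t^\infty \sin((t-s)\xi)\FtH f(\xi,s)\,ds$ recorded in the excerpt, Plancherel for $\FH$ and Minkowski in $s$, together with the embedding $LX\subset L^2$ from the previous section, to obtain $\|L^*\psi(t)\|_{L^2} \les \int_t^\infty \| f(s)\|_{LX}\, ds$. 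Since $\tilde H = LL^*$, we have $\|L^*\psi\|_{L^2}^2 = \langle \psi,\tilde H \psi\rangle$, and a standard Hardy-type comparison for equivariant functions yields $\|\psi\|_{\dHe}\approx \|L^*\psi\|_{L^2}$, transferring the $L^2$ bound to a $\dHe$ bound.

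\textbf{Reassembly of the $LX$ norm of $\psi$.} The $LX$ norm decomposes into an $\ell^2$ piece for $k\ge 0$ and a weighted $\ell^1$ piece with weight $2^{-k}/|k|$ for $k<0$. On the high-frequency side Minkowski's inequality in $s$ allows moving the $\ell^2_k$-norm inside, and $\min(|t-s|,2^{-k})\le 1$ reduces the estimate to
\[
\Big(\sum_{k\ge 0}\|P_k^{\tilde H}\psi(t)\|_{L^2}^2\Big)^{1/2} \les \int_t^\infty \| f(s)\|_{LX}\, ds \les t^{-\alpha-1}M.
\]
On the low-frequency side I would use Fubini to interchange the sum in $k$ and the integral in $s$, together with the universal bound $\min(|t-s|,2^{-k}) \le |t-s|$, to obtain
\[
\sum_{k<0}\frac{2^{-k}}{|k|}\|P_k^{\tilde H}\psi(t)\|_{L^2} \les \int_t^\infty |t-s|\| f(s)\|_{LX}\, ds \les M\int_t^\infty (s-t)s^{-\alpha-2}\, ds \les t^{-\alpha}M,
\]
using that the last integral equals $t^{-\alpha}/(\alpha(\alpha+1))$. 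Combining the two pieces gives $\|\psi(t)\|_{LX}\les t^{-\alpha}M$.

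\textbf{The $\partial_t\psi$ and $\dHe$ bounds.} For $\partial_t \psi$ the per-band kernel estimate no longer carries a factor of $|t-s|$, so the Minkowski/Fubini arguments reduce both the $k\ge 0$ and the $k<0$ parts to $\int_t^\infty \| f(s)\|_{LX}\,ds \les t^{-\alpha-1}M$, producing $\|\partial_t\psi(t)\|_{LX}\les t^{-\alpha-1}M$. The same time integral controls $\|L^*\psi(t)\|_{L^2}$, and hence $\|\psi(t)\|_{\dHe}\les t^{-\alpha-1}M$.

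\textbf{Main obstacle.} The delicate point is the low-frequency reassembly of $\|\psi\|_{LX}$. The non-$\ell^2$ structure of the $k<0$ part of the $LX$ norm prevents a Cauchy--Schwarz or Minkowski in $k$, and the sharper bound $\min=2^{-k}$ cannot be used because its tail as $k\to-\infty$ diverges against the weight $2^{-k}/|k|$. The uniform estimate $\min\le |t-s|$ is exactly what survives; trading the per-band gain for an extra factor of $|t-s|$ inside the $s$-integral is what is responsible for the one-power-of-$t$ gap between the $\psi$ and $\partial_t\psi$ bounds in \eqref{Kest}.
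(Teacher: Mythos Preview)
Your treatment of the $LX$ bounds for $\psi$ and $\partial_t\psi$ is correct and matches the paper's argument (you have merely spelled out the dyadic reassembly that the paper compresses into one sentence). The genuine gap is in the $\dHe$ estimate.

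The claimed equivalence $\|\psi\|_{\dHe}\approx \|L^*\psi\|_{L^2}$ is false. Since $\tilde H=-\Delta+\dfrac{4}{r^2(1+r^2)}$, one has
\[
\|L^*\psi\|_{L^2}^2=\la \psi,\tilde H\psi\ra=\int_0^\infty\Big(|\partial_r\psi|^2+\frac{4}{r^2(1+r^2)}|\psi|^2\Big)\,r\,dr,
\]
and the potential decays like $r^{-4}$ at infinity, so it does not control $\|\psi/r\|_{L^2}$. There is no two-dimensional Hardy inequality to fill the gap: the resonance $\psi_0\sim\log r$ for $r\to\infty$ satisfies $L^*\psi_0=\phi_0\in L^2$ while $\psi_0/r\notin L^2$, and truncations $\chi(r/R)\psi_0$ show that the ratio $\|\psi/r\|_{L^2}/\|L^*\psi\|_{L^2}$ is unbounded. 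Hence your $L^2$ bound on $L^*\psi$ alone cannot deliver the $\dHe$ bound.

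The paper repairs this by using the full $LX$ structure, not just $LX\subset L^2$. It sets $g$ so that $\xi\,\FtH\psi=\FtH g$, proves $\|g\|_{LX}\les\int_t^\infty\|f(s)\|_{LX}\,ds\les t^{-\alpha-1}M$, and then bounds $\|\psi/r\|_{L^2}$ band by band using pointwise estimates on the generalized eigenfunctions: for $k\ge0$ the pieces are almost orthogonal, while for $k<0$ one gets $\|\psi_k/r\|_{L^2}\les|k|^{1/2}\|g_k\|_{L^2}\les|k|^{3/2}2^{k}\|g\|_{LX}$, which is summable in $k$. The weighted $\ell^1$ low-frequency part of the $LX$ norm is precisely what makes this work; it is the step your argument is missing.
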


\begin{proof} The solution of \eqref{lineq} with zero data at $\infty$
  is given by $\psi=Kf$.  The estimate for the first term follows from
  the bound $|\frac{\sin(t-s)\xi}{\xi}| \les |t-s|$ and the representation of the spaces $LX$ 
on the Fourier side. The estimate   for the second term is similar.

The argument for the third term is more involved. We denote 
\[
\mathcal{F}_{\tilde H} g(t,\xi)=- \int_t^\infty \sin((t-s)\xi) \FtH f(\xi,s) ds
\]
Then 
\[
\xi\mathcal{F}_{\tilde H} \psi(t,\xi) =  \mathcal{F}_{\tilde H} g(t,\xi)
\]
We estimate as above
\[
\|  g(t) \|_{LX} \les \int_t^\infty \| f(s) \|_{LX} ds \lesssim t^{-\alpha -1}
 \sup_s s^{\alpha + 2} \| f(s) \|_{LX}
\]
Hence it suffices to show that for $\psi$ and $g$ related as above
we have
\begin{equation}
\|\psi\|_{\dHe} \lesssim  \|g\|_{LX}
\end{equation}
Here the time variable plays no role and is discarded.  Recalling the
form of $L^*$ from \eqref{HL}, namely $L^{*}= -\partial_r +
\frac{h_3-1}{r}$, it follows that
\[
\| \psi \|_{\dHe} \les \| L^* \psi \|_{L^2} + \| \frac{\psi}{r} \|_{L^2}
\]
For the first term we use Plancherel to write
\[
\| L^*   \psi(t) \|_{L^2}^2 = \la \psi(t), \tilde H \psi(t)\ra
= \| \xi \mathcal{F}_{\tilde H} \psi(\xi)\|_{L^2}^2 = 
\| g\|_{L^2} \lesssim \|g\|_{LX}^2
\]
For the second term the $L^2$ bound for $g$ no longer suffices, and 
 we need to use the $LX$ norm of $g$. We consider a Littlewood-Paley 
decomposition for both $\psi$ and $g$, and denote their dyadic pieces by 
$\psi_k$, respectively $g_k$. Then 
\[
\| \psi_k\|_{L^2} \approx 2^{-k} \|g_k\|_{L^2}
\]
By using \eqref{pointtp}, \eqref{psirep} and the Cauchy-Schwartz inequality we obtain
pointwise bounds for $\psi_k$, namely
\[
|\psi_k| \lesssim \frac{m_k(r)}{\langle 2^{k} r \rangle^\frac12} 2^k \| \psi_k\|_{L^2}
\lesssim \frac{m_k(r)}{\langle 2^{k} r \rangle^\frac12}  \| g_k\|_{L^2}
\]
with $m_k$ as in \eqref{defmk}. For $k \geq 0$ the contributions are almost 
orthogonal and we  obtain
\[
\| \frac{\psi_{\geq 0}}r\|_{L^2} \lesssim \| g_{\geq 0}\|_{L^2}
\]
However, if $k < 0$ then the weaker logarithmic decay for small $r$ no longer 
suffices for such an argument.  Instead by direct computation
we obtain a weaker bound, 
\[
\| \frac{\psi_{k}}r\|_{L^2} \lesssim |k|^{\frac12} \| g_k\|_{L^2} 
\lesssim |k|^{\frac32} 2^{k} \|g\|_{LX}
\]
Then the $k$ summation is easily accomplished.

\end{proof}

\section{Analysis of the first approximations  
$\bw$ and $\bu$}
\label{s:bubw}

\subsection{ Pointwise bounds for $\bw$}

We denote $f_0=\FtH w_0$ and $f_1=\FtH w_1$.  Then for $\bw$ we have
the representation
\[
\bw(t,r) = \int_0^\infty  \psi_\xi(r)
(f_0(\xi) \cos(t\xi) + \frac{1}{\xi} f_1(\xi) \sin(t\xi)) d\xi
\]
Since $w_0, w_1$ are Schwartz functions  satisfying 
\eqref{nonres}, from \eqref{nonresbound} we obtain
\begin{equation} \label{f01}
|(\xi \partial_\xi)^\alpha f_0(\xi)| + |(\xi \partial_\xi)^\alpha f_1(\xi)| \les_{\alpha, N} 
 \|(w_0,w_1)\|_{S} \left\{ 
\begin{array}{ll}
 \frac{\xi^\frac52}{\la \log \xi\ra}, \quad  & \xi \les 1 \cr
\la \xi \ra^{-N}, \quad &  \xi \ges 1
\end{array}
\right.
\end{equation}
Here by a slight abuse of notation we use $\|.\|_S$ to denote a finite collection  
of the $S$ seminorms. This will allow us to obtain pointwise bounds for $\bw$:

\begin{lemma} If  $w_0, w_1$ are Schwartz functions  satisfying the moment conditions
\eqref{nonres}  then $\bw$ satisfies
\begin{equation}\label{bwpoint}
|\bw(r,t)| \les \frac{\log(1+r^2)}{\log\la r+t\ra} \frac{1}{\la t+r \ra ^{\frac12} \la t-r \ra^{\frac52} \log\la r-t\ra} \|(w_0,w_1)\|_{S}
\end{equation}
% \begin{equation}\label{dbwpoint}
% |\delta \bw(r,t)| \les \min(1, \frac{\log(1+r^2)}{\log(2+|r-t|)}) \frac{\| \delta f \|_S}{t^{\frac12} \la r-t \ra^{\frac52} \log(2+|r-t|) } 
% \end{equation}
\end{lemma}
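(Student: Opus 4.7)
The representation formula for $\bw$ expresses it as a superposition of generalized eigenfunctions $\psi_\xi(r)$ oscillating in time as $\cos(t\xi)$ and $\sin(t\xi)$. The plan is to insert the two-regime asymptotic description of $\psi_\xi$ from Section~\ref{s:ft} into the integral and estimate the resulting $\xi$-integrals by a combination of direct bounds at low frequency and repeated integration by parts in $\xi$, driven by the vanishing bound $|(\xi\partial_\xi)^\alpha f_j(\xi)|\lesss \xi^{5/2}/|\log\xi|$ at $\xi=0$ and the Schwartz decay at infinity recorded in \eqref{f01}. The three decay factors in the conclusion will correspond, respectively, to the amplitude $r^{-1/2}$ in the oscillatory representation \eqref{reppsi} (giving $\la t+r\ra^{-1/2}$), to integration by parts against the phases $e^{\pm i(t-r)\xi}$ using the $\xi^{5/2}$ vanishing of $f_j$ (giving $\la t-r\ra^{-5/2}$), and to the $|\log\xi|$ weights appearing in both $q(\xi)$ and in \eqref{f01} (giving the two logarithmic factors).

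First I would smoothly split the $\xi$-integral into a low-frequency piece supported in $\xi\lesssim 1$ and a high-frequency piece $\xi\gtrsim 1$. The high-frequency tail is harmless: using \eqref{reppsi} and writing the integrand as a symbol against $e^{\pm i(t\pm r)\xi}$, the Schwartz decay of $f_0,f_1$ allows arbitrarily many integrations by parts in $\xi$, producing decay faster than any polynomial in $\la t\pm r\ra$, in particular dominated by the stated bound. The only remaining work is on the low-frequency piece, which I would split further according to the size of $r\xi$.

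In the low-frequency subregion $r\xi\lesssim 1$ I would use the expansion \eqref{psirepa}, so that $\psi_\xi(r)\approx \xi q(\xi)\psi_0(r)$ modulo higher powers of $(r\xi)^2$. The factor $\psi_0(r)\lesssim \log(1+r^2)$ supplies the numerator of the stated bound, while the combination $\xi q(\xi)\sim \xi^{1/2}/|\log\xi|$ and $|f_j(\xi)|\lesss \xi^{5/2}/|\log\xi|$ leaves an amplitude $\xi^{3}/|\log\xi|^2$ times the $\cos(t\xi),\sin(t\xi)$ factors. Two integrations by parts against these oscillations transfer $\la t\ra^{-2}$ outside and leave a convergent $\xi$-integral up to the cutoff $\xi\sim\min(1,1/r)$, producing an additional $\la t\ra^{-1/2}/|\log\la t\ra|^2$; in this regime $r\lesssim 1/\xi\lesssim \la t\ra$, so $\la t+r\ra\approx \la t-r\ra\approx \la t\ra$ and the two logarithms fit into the two separate denominators $\log\la t+r\ra$ and $\log\la t-r\ra$ of the bound.

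In the complementary low-frequency region $r\xi\gtrsim 1$ I would use the oscillatory representation \eqref{psirep}--\eqref{reppsi}, obtaining four terms with phases $e^{i\varepsilon(t-\delta r)\xi}$, amplitude of size $r^{-1/2}$, and symbolic $\xi$-dependence given by $a(\xi)\tilde\sigma(r\xi,r)f_j(\xi)$. The prefactor $r^{-1/2}\lesssim \la t+r\ra^{-1/2}$ (since $r\xi\gtrsim 1$ and $\xi\lesssim 1$ force $r\gtrsim 1$) already delivers the $\la t+r\ra^{-1/2}$ factor. For the phases $e^{\pm i(t+r)\xi}$, repeated integration by parts gives rapid decay in $\la t+r\ra$ and hence also in $\la t-r\ra$ whenever $r\not\approx t$; the remaining case $r\approx t$ is absorbed into the $\la t+r\ra^{-1/2}$ bound after using the $\xi^{5/2}/|\log\xi|$ vanishing directly. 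The critical phases are $e^{\pm i(t-r)\xi}$: for these, each integration by parts in $\xi$ produces a factor $\la t-r\ra^{-1}$ at the cost of one $\xi$-derivative on $a\tilde\sigma f_j$; the $\xi^{5/2}/|\log\xi|$ vanishing of $f_j$ at the lower endpoint controls boundary terms, while choosing the cutoff $\xi\sim 1/\la t-r\ra$ and balancing between the $\xi^{5/2}$ vanishing and the integration-by-parts decay yields the claimed $\la t-r\ra^{-5/2}/\log\la t-r\ra$.

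The main obstacle will be the careful bookkeeping of the logarithmic factors and of boundary contributions at the transition $r\xi\sim 1$, where the two representations of $\psi_\xi$ are glued together. Tracking these weights so that exactly two logarithms (one in each of the variables $r+t$ and $r-t$) emerge in the denominator, rather than a weaker combination, requires choosing the $\xi$-cutoffs symmetrically and exploiting the full $1/|\log\xi|$ improvement in both $q(\xi)$ and $f_j(\xi)$; analytically the argument is a routine stationary phase / integration by parts computation, but the logarithmic accounting is the delicate part.
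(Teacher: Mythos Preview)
Your approach is essentially the same as the paper's: both arguments insert the two-regime asymptotics for $\psi_\xi$ from Section~\ref{s:ft}, exploit the $\xi^{5/2}/|\log\xi|$ vanishing in \eqref{f01}, and extract the decay via stationary phase/integration by parts. The paper organizes this through a dyadic Littlewood--Paley decomposition in $\xi$, deriving a single bound for each piece $\bw_k$ and then summing; this packaging makes the logarithmic bookkeeping (which you correctly flag as the delicate point) and the matching at $r\xi\sim 1$ automatic, and would also straighten out the slightly imprecise count in your low-frequency paragraph (two integrations by parts plus an ``additional $\la t\ra^{-1/2}$'' does not quite land on the required $\la t\ra^{-3}(\log\la t\ra)^{-2}$; the dyadic sum peaking at $2^k\sim\la t\ra^{-1}$ gives it cleanly).
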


\begin{proof} We fix $k$ and consider
\[
\bw_k(t,r)=\int_0^\infty   \psi_\xi(r) (f_0(\xi) \cos(t\xi) + \frac{1}{\xi} f_1(\xi) \sin(t\xi)) \chi_{k}(\xi) d\xi
\]
For $\psi_\xi(r)$ we use the representation \eqref{psirepa} in the region
$\{ r \xi \lesssim 1\}$, respectively  \eqref{psirep} in the region
$\{ r \xi \gtrsim 1\}$. Then via a standard stationary phase 
argument  we obtain
\[
|w_k(r,t)|\les_N \frac{2^\frac{k}2 \la 2^k r \ra^{-\frac12}  m_k(r)}{\la 2^k |r-t| \ra^N \la k^{-} \ra} 2^{\frac{5k}2} 2^{-Nk^+}.
\]
The desired estimate \eqref{bwpoint} follows by summing these bounds with 
respect to $k$.
\end{proof}

\subsection{ Bounds for  $\bu$, $\bu_t$}

Next we consider $\bu$, which is recovered from $\bw$ via \eqref{budef}.
This equation contains a nonlinear part coming from the sine function.
Consequently, we split $\bu$ into a linear and a nonlinear part:
\[
\bu = Q+  \bu^l + \bu^{nl}
\]
where $\bu^l$ solves the linear part of \eqref{budef}
\[
L \bu^l = \bw
\]
and $\bu^{nl}$ solves 
\begin{equation} \label{nleq}
L \bu^{nl} = N(\bu^l, \bu^{nl})
\end{equation}
where
\[
N(u,v) = \frac1r \big[  \sin{Q} \cdot (\cos (u + v)-1) + \cos{Q} \cdot (\sin (u + v) - (u + v)) \big]
\]
Both of the above ode's are taken with zero Cauchy data at infinity or, equivalently,
can be interpreted via the diffeomorphism $L: X \to LX$. 
The linear part $\bu^l$ is recovered from the explicit formula
\[
\bu^l := L^{-1} \bw = \int_0^\infty  \xi^{-1} \phi_\xi(r)
(f_0(\xi) \cos(t\xi) + \frac{1}{\xi} f_1(\xi) \sin(t\xi)) d\xi
\]
and will be split into a resonant and a nonresonant part $ \bu^l =
\bu^{l,r}+ \bu^{l,nr}$.
 
For the nonlinear part we use an iterative argument based on the fact that
there is enough decay on the right-hand side so that we can recover it via
\begin{equation}\label{bunl}
\bu^{nl} = h_1(r) \int_r^\infty \frac{N(\bu^l, \bu^{nl})}{h_1(s)} ds
\end{equation}
At this stage we also want to keep track of the differences of solutions. 
For this we denote by $\delta w_0, \delta w_1, \delta \bw, \delta \bu$ the corresponding 
differences. 

\begin{proposition} \label{lul} a)  Assume that $w_0, w_1$ 
are small Schwartz functions  satisfying \eqref{nonres}. Then 
\begin{equation} \label{bubound}
\bu^{l}=\bu^{l,r} + \bu^{l,nr},
\end{equation}
where $\bu^{l,r}$ and $\bu^{l,nr}$ satisfy the following bounds
\begin{equation} \label{bulr}
\begin{split}
|\bu^{l,r}| + r |\partial_r \bu^{l,r}|+  \la r+ t \ra |\partial_t \bu^{l,r}| \les & \ \frac{h_1(r)}{\la t+r \ra\log^2\la  t +r \ra} \|(w_0,w_1)\|_{S}, 
\\
| \bu^{l,nr}|+  \frac{r\la r-t \ra}{\la t+r \ra}  |\partial_r \bu^{l,nr}|+ \la r-t \ra | \partial_t \bu^{l,nr}| \les & \ 
 \frac{r}{r+\la t \ra} \frac{1}{\la t+r \ra ^{\frac12} \la t-r \ra^{\frac32}\log \la t-r \ra} \|(w_0,w_1)\|_{S}.
\end{split}
\end{equation}
In addition, 
\begin{equation} \label{ulrt}
|(\partial_r + \partial_t) \bu^l +  \frac1{2r} \bu^l| 
 \les \frac{1}{t^\frac52 \la r-t \ra^\frac12 \log \la t-r \ra} \|(w_0,w_1)\|_{S},
\qquad r \sim t
\end{equation}

b) For $t \gtrsim_S 1$ the nonlinear part $\bu^{nl}$ satisfies the bounds
\begin{equation} \label{unl}
\begin{split}
|\bu^{nl}(r,t)| \lesss & \  h_1(r) t^{-1.5} \|(w_0,w_1)\|_{S}, 
\\ |\partial_t \bu^{nl} + \frac16 h_1 (\bu^l)^3| \lesss & \ h_1(r) t^{-2} \|(w_0,w_1)\|_{S}
\end{split}
\end{equation}

c) The above estimates hold true for $\delta \bu^{nl}$ and  $\delta \partial_t \bu_l$,
\begin{equation} \label{dunl}
\begin{split}
|\delta \bu^{nl}(r,t)| \lesss & \  h_1(r) t^{-1.5} \|(\delta w_0,\delta w_1)\|_{S}, 
\\ |\partial_t \delta \bu^{nl} + \frac16 h_1 \delta (\bu^l)^3| \lesss & \ h_1(r) t^{-2} \|(\delta w_0,\delta w_1)\|_{S}
\end{split}
\end{equation}
\end{proposition}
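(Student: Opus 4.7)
The strategy is to treat the linear part $\bu^l$ by explicit analysis of its Fourier representation, and then to obtain $\bu^{nl}$ as a fixed point of the integral operator in \eqref{bunl} for $t$ large; part (c) follows from the linear structure of (a) and the Lipschitz character of the iteration in (b).

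For part (a), the starting point is
\[
\bu^l(r,t)=\int_0^\infty \xi^{-1}\phi_\xi(r)\bigl[f_0(\xi)\cos(t\xi)+\xi^{-1}f_1(\xi)\sin(t\xi)\bigr]\,d\xi.
\]
I use the two regime representations of $\phi_\xi$. In the regime $r\xi\les 1$, the expansion \eqref{repphi} separates the leading resonant factor $q(\xi)\phi_0(r)$, which naturally defines
\[
\bu^{l,r}(r,t):=\phi_0(r)\int_0^\infty \xi^{-1}q(\xi)\chi_{\les 1}(r\xi)\bigl[f_0(\xi)\cos(t\xi)+\xi^{-1}f_1(\xi)\sin(t\xi)\bigr]\,d\xi,
\]
with $\bu^{l,nr}:=\bu^l-\bu^{l,r}$. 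For $\bu^{l,r}$, combining \eqref{qest} with the vanishing \eqref{f01} of $f_0,f_1$ at the origin makes the $\xi$-integrand smooth with integrable low-frequency behavior; an $N$-fold integration by parts in $\xi$ against the oscillatory factors yields the $\la t+r\ra^{-1}\log^{-2}\la t+r\ra$ decay, while $\partial_r$ and $\partial_t$ are handled by differentiating under the integral sign. For $\bu^{l,nr}$, in the regime $r\xi\ges 1$ I substitute the outgoing/incoming representation \eqref{repphi+}--\eqref{phipsi} so that the integrand becomes a combination of $r^{-1/2}e^{\pm i(r\pm t)\xi}$ times slowly varying symbols; integration by parts in $\xi$ with the nondegenerate phases $(r\pm t)\xi$ produces the $\la r-t\ra^{-3/2}$ factor, while the series remainder from $r\xi\les 1$ is controlled directly by the $(r\xi)^2$ gain in \eqref{pointphilow}. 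The refined estimate \eqref{ulrt} rests on the observation that the outgoing factor $r^{-1/2}e^{i(r-t)\xi}$ lies in the kernel of $\partial_r+\partial_t+\tfrac{1}{2r}$: only the incoming piece contributes near the cone $r\sim t$, and with no stationary point it saves one more power of $t^{-1}$.

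For part (b), I solve \eqref{bunl} by contraction in the ball $\{|v|\le C h_1(r) t^{-3/2}\}$ for $t\gtrsim_S 1$. The nonlinearity $N(\bu^l,v)$ is quadratic to leading order in $\bu^l+v$ with a favorable $\frac{1}{r}\sin Q$ prefactor, so the bounds from (a) give that $h_1(r)\int_r^\infty N(\bu^l,v)/h_1(s)\,ds$ again has size $h_1(r)t^{-3/2}$, and contractivity in the same norm follows by applying the same estimates to differences; the sharp exponent $3/2$ traces back to integrating the $\la s-t\ra^{-3/2}$ tail of $\bu^{l,nr}$ across the light cone. For the $\partial_t$ estimate I differentiate the fixed point identity and expand $N$ to cubic order: the quadratic contribution to $\partial_t N/h_1$ is only borderline integrable in $s$, and the outgoing cancellation \eqref{ulrt} is used to integrate by parts in $s$, isolating a boundary term at $s=r$ which evaluates to $-\tfrac16 h_1(r)(\bu^l(r,t))^3$ (coming from the $\cos Q\cdot[-(u+v)^3/6]$ term in $N$), while the remaining interior integral is one power of $t$ better. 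Part (c) is immediate: the map $(w_0,w_1)\mapsto \bu^l$ is linear, so the estimates of (a) transfer verbatim to $\delta\bu^l$, and the fixed point map of (b) is Lipschitz in $\bu^l$ in the same norm, giving the first bound in \eqref{dunl}, with the second following from the same linearization.

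\textbf{Main obstacle.} The most delicate step is the isolation of the cubic asymptotic $-\tfrac16 h_1(\bu^l)^3$ in $\partial_t\bu^{nl}$: the naive estimate of the quadratic piece of $\partial_t N$ is borderline non-integrable in $s$ and of the wrong order, and the resolution requires combining the outgoing-wave cancellation \eqref{ulrt} with integration by parts in $s$ to reduce that contribution to a boundary term on the light cone, which is where the precise algebra of $N$ enters.
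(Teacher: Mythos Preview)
Your plan matches the paper's proof closely: the resonant/nonresonant splitting of $\bu^l$, the stationary phase estimates in each regime, the fixed point for $\bu^{nl}$ in the weighted sup norm $\|h_1^{-1}t^{3/2}f\|_{L^\infty}$, and the mechanism for the $\partial_t\bu^{nl}$ bound (your ``integrate by parts in $s$ using $\partial_t\approx-\partial_s$'' is exactly the paper's manoeuvre of writing $L\bigl(\partial_t\bu^{nl}+\tfrac{h_1}{6}(\bu^l)^3\bigr)$ and using $L(h_1 f)=h_1\partial_r f$ to produce $\tfrac{h_1}{6}(\partial_t+\partial_r)(\bu^l)^3$, which is then fed into \eqref{ulrt}).

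There is one genuine gap in your argument for \eqref{ulrt}. You write that $r^{-1/2}e^{i(r-t)\xi}$ is annihilated by $\partial_r+\partial_t+\tfrac{1}{2r}$ and conclude that ``only the incoming piece contributes near the cone.'' That is not quite right: the high-frequency part of $\bu^l$ is built from $r^{-1/2}e^{ir\xi}\sigma(r\xi,r)$, not from the bare exponential, and applying $\partial_r+\partial_t+\tfrac{1}{2r}$ leaves, besides the incoming $e^{i(r+t)\xi}$ term, an outgoing residual $r^{-1/2}e^{i(r-t)\xi}\,\tfrac{d}{dr}\sigma(r\xi,r)$. This term still has the resonant phase, so no further oscillatory gain is available; the estimate \eqref{ulrt} holds only because the symbol satisfies $\tfrac{d}{dr}\sigma(r\xi,r)=O\bigl(r^{-1}(r\xi)^{-1}\bigr)$, which supplies the extra $t^{-2}$ by hand. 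In fact this outgoing residual is the \emph{dominant} contribution to \eqref{ulrt}: the incoming piece is of order $t^{-3}(\log t)^{-1}$, strictly better than the stated $t^{-5/2}\la r-t\ra^{-1/2}(\log\la r-t\ra)^{-1}$. So the bound is correct, but its source is the decay of $\partial_r\sigma$ (this is what the paper's remark about ``the exact decay properties at infinity for the potential'' is pointing to), not the absence of outgoing terms. You need to account for this piece explicitly.
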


\begin{remark}
  By finite speed of propagation arguments it is not difficult to show
  that $\bu^l$ decays rapidly outside the cone. However, for our
  purposes the decay established in the above proposition suffices.
\end{remark}

\begin{remark}
The bound \eqref{ulrt} shows that a double cancellation occurs on the light cone,
as opposed to the expected single cancellation. This is a consequence of the exact 
decay properties at infinity for the potential in $\tilde H$.
\end{remark}

\begin{remark}  The second estimate in part (b) is the outcome of 
a more subtle nonlinear cancellation, rather then a brute force computation.
\end{remark}

%A more precise expression for $u^{l,r}$ is available, but it is more involved.
%I guess it will be needed later when we may want to give up on one of the orthogonality conditions.

\begin{proof} a) We first split $\bu^l$ into two parts,
\[
\bu^l(r,t)=\sum_{k} \bu_k^l(r,t) = \sum_{2^k \les\  r^{-1}} \bu_k^l(r,t) + \sum_{2^k \ges\ r^{-1}} \bu_k^l(r,t):= \bu^l_{low}(r,t)+ \bu^l_{hi}(r,t)
\]
where
\[
\bu^l_k=\int \xi^{-1} \phi_\xi(r) \chi_k(\xi) \left[ \cos (t\xi) \cdot \hat f_0(\xi) + 
\frac{\sin(t\xi)}{\xi} \hat{f}_1(\xi)\right] d\xi 
\]
Further, using the power series \eqref{repphi}, we can write
\[
\bu^l_k = \int \xi^{-2} q(\xi) \sin(t\xi) (\phi_0(r)+ \frac1r \sum_{j \geq 1} (r\xi)^{2j} \phi_j(r^2) ) \hat{f}_1(\xi) \chi_k(\xi) d\xi, \qquad 2^k r \lesssim 1
\]
which leads to a corresponding decomposition 
\[
\bu^l_{low} = \bu^{l,0}_{low} + \sum_{j \geq 1} \bu^{l,j}_{low}
\]
Then we set 
\begin{equation}
\bu^{l,r} =  \bu^{l,0}_{low}, \qquad \bu^{l,nr} =   
\bu^l_{hi} +  \sum_{j \geq 1} \bu^{l,j}_{low}
\end{equation}
and proceed to estimate all of the above components of $\bu^{l}$.

The terms in $\bu^l_{hi}$ are estimated by stationary phase using 
\eqref{f01} and the $\phi_\xi$ representation in \eqref{phipsi}.
 This yields
\begin{equation}\label{bulhia}
|\bu^{l}_k| \les  \frac{r ^{-\frac12} 2^{\frac{3k}2}}{\la 2^k |r-t| \ra^N   \la k^{-} \ra } 2^{-Nk^+},
\qquad 2^k r \gtrsim 1
\end{equation}
which, after summation with respect to $k$ gives the bound
\[
 | \bu^l_{hi}| \lesssim \sum_{2^k \ges r^{-1}} |\bu_k^l(r,t)|  \les 
\left(\frac{r}{\la r+ t \ra}\right)^{N}
\frac{1}{\la r+ t\ra ^{\frac12} \la r-t \ra^{\frac32} \log\la r-t \ra}.
\]
The bounds for the time derivative are obtained from the explicit formula
\[
\partial_t \bu^l = \int_0^\infty \phi_\xi(r)
(-f_0(\xi) \sin(t\xi) + \frac{1}{\xi} f_1(\xi) \cos(t\xi)) d\xi
\]
which shows that we produce an extra $2^k$ factor in \eqref{bulhia}. Similarly, an $r$ 
derivative applied to $\phi_\xi$ yields an additional $2^k$ factor in the asymptotic expansion.
Thus we obtain
\begin{equation}\label{bulhib}
|\partial_t \bu^{l}_k|+ |\partial_r \bu^{l}_k|\les  \frac{r ^{-\frac12} 2^{\frac{5k}2}}{\la 2^k |r-t| \ra^N   \la k^{-} \ra } 2^{-Nk^+},
\qquad 2^k r \gtrsim 1
\end{equation}
which leads to 
\[
 |\partial_t \bu^l_{hi}|+|\partial_r \bu^l_{hi}|  \les 
\left(\frac{r}{\la r+ t \ra}\right)^{N}
\frac{1}{\la r+ t\ra ^{\frac12} \la r-t \ra^{\frac52} \log\la r-t \ra}.
\]

We now consider  the terms in $\bu^{l,j}_{low}$. The main
contribution comes from $f_1$, so we take $f_0=0$ for convenience.
For $j=0$ we have
\[
\bu^{l,0}_{low}= \phi_0(r) \sum_k 
\chi_{\les 2^{-k}}(r) \int \xi^{-2} q(\xi) \sin(t\xi)  \hat{f}_1(\xi) \chi_k(\xi) d\xi
:=\phi_0(r) \sum_k 
\chi_{\les 2^{-k}}(r) g_k^0(t) :=\phi_0(r) g^0(r,t)
\]
Using stationary phase and the properties of $q$ we have
\[
|g^0_k(t)| + 2^{-k} |\partial_tg^0_k(t)| \les \frac{2^k}{\la k^- \ra^2 \la 2^k t \ra^N} 2^{-Nk^+}
\]
By summing with respect to $k$ we obtain 
\begin{equation}\label{bu0low}
 |g^0(r,t)| + \la t+r\ra (|\partial_rg^0(r,t)| + |\partial_tg^0(r,t)| )  \les  \frac{1}{\la t+r\ra
\log^2\la t+r\ra}.
\end{equation}
which yields the $\bu^{l,r}$ bound in \eqref{bulr}.

For $j \geq 1$ we have
\[
\begin{split}
u^{l,j}_{low} = & \ \sum_k 
\chi_{\{r \les 2^{-k}\}}
\frac1r \int \xi^{-2} q(\xi) \sin(t\xi) \sum_{j \geq 1} (r\xi)^{2j} \phi_j(r^2)  \hat{f}_1(\xi) \chi_k(\xi) d\xi
\\:= & \  r^{2j-1} \phi_j(r^2) \sum_k 
\chi_{\les 2^{-k}}(r)  g_k^j(t) :=   r^{2j-1} \phi_j(r^2) g^j(r,t)
\end{split}
\]
By stationary phase and the properties of $q$ and $\hat f_1$  we have
\[
|g^j_k(r,t)| 
+ 2^{-k} (|\partial_t g^j_k(r,t)|+|\partial_r g^j_k(r,t)|)  \les \frac{2^{(2j+1)k}}{\la k^- \ra^2 \la 2^k t \ra^N} 2^{-Nk^+}
\]
Summing up over $k$ we obtain
\begin{equation}
|g^j(r,t)| + \la t+r\ra (|\partial_rg^j(r,t)| + |\partial_tg^j(r,t)| ) \les  \frac{1}{\la t+r\ra^{2j+1} \log^2\la t+r\ra}
\end{equation}
Hence, using the bound \eqref{derphi} for $\phi_j$ we obtain
a bound for  $\bu^{l,j}_{low}$, namely
\begin{equation}\label{bujlow}
 |\bu^{l,j}_{low}(r,t)|+ |r \partial_r \bu^{l,j}_{low}(r,t)| + \la t+r \ra |\partial_t \bu^{l,j}_{low}(r,t)|
\lesssim \frac{C^j}{j!}  \frac{r^{2j-1} \log(1+r^2)}{\la t+r\ra^{2j+1} \log^2\la t+r\ra}
\end{equation}
Thus these contributions satisfy the bounds required of  $\bu^{l,nr}$.

We now turn our attention to the estimate \eqref{ulrt}, which applies in the region 
where $ r \approx t$. By \eqref{bulr} (for $\bu^l$) and by \eqref{bujlow}, 
the contributions of the term $\bu_{low}^l$ are all below the required threshold, 
so it remains to consider $\bu_{hi}^l$. We have 
\[
\begin{split}
\bu^l_{hi}(r,t) & = \int_0^\infty  \chi_{\ges r^{-1}}(\xi)
 \xi^{-1} \phi_\xi(r) (f_0(\xi) \cos(t\xi) + \frac{1}{\xi} f_1(\xi) \sin(t\xi)) d\xi 
\end{split}
\]
For $\phi_\xi$  we use the representation \eqref{phipsi} with $\phi_\xi^+$ 
as in \eqref{repphi+},
\[
\phi_\xi  = r^{-\frac12}(a(\xi) \sigma(r\xi,r) e^{ir\xi} +  \bar a(\xi)  \bar \sigma(r\xi,r) e^{-ir\xi}),
\qquad r\xi \gtrsim 1
\]
We notice that the operator $\partial_r + \partial_t$ kills the resonant factors 
$e^{\pm i(r - t)\xi}$ factors.  Precisely, we have 
\[
(\partial_r + \partial_t  +\frac{1}{2r} ) \phi_\xi(r)  \sin(t\xi) 
= 2r^{-\frac12}  \Re \left( e^{i\xi(r+t)} \xi a(\xi) \sigma(r\xi,r)\right)
 -  r^{-\frac12} \Re   \left( e^{ir\xi} a(\xi) \partial_r \sigma(r\xi,r) \right) \sin(t\xi)
\]
and a similar computation where $\sin(t\xi)$ is replaced by $\cos(t\xi)$.
This leads to 
\[
\begin{split}
 (\partial_r + \partial_t+\frac{1}{2r}) \bu^l_{hi}= &
    \int_0^\infty \chi_{\ges r^{-1}}(\xi) 
r^{-\frac12} \Re \left( 2i \xi e^{i(r+t)\xi} a(\xi) \sigma(r\xi,r)+ e^{ir\xi} a(\xi) \partial_r \sigma(r\xi,r) \cos(t\xi) \right) \frac{f_0(\xi)}{\xi} d\xi \\
 + & \int_0^\infty  \chi_{\ges r^{-1}}(\xi)  r^{-\frac12}
 \Re \left( 2i \xi e^{i(r+t)\xi} a(\xi) \sigma(r\xi,r) +  e^{ir\xi} a(\xi) \partial_r \sigma(r\xi,r) \sin(t\xi) \right) \frac{f_1(\xi)}{\xi^2} d\xi \\
\end{split}
\]
The two integrals above are treated as before, using stationary phase. 
The first term in each of the last integrals has a nonresonant phase, therefore 
each integrating by parts gains a factor of $(\xi t)^{-1}$.  Thus, taking 
\eqref{f01} into account, their contributions 
can be estimated by 
\[
\int_0^\infty \chi_{\ges t^{-1}}(\xi)  t^{-\frac12}  \xi (t\xi)^{-N}   \frac{\xi^{\frac52}}{\xi^2 \log \xi} d\xi 
\approx  \frac{1}{t^3 \log t} 
\]
The  second term contains the expression
$\partial_r \sigma(r\xi,r)$ which (see the description of 
$\sigma$ in Section~\ref{s:ft}) brings an additional factor of 
$r^{-1}(r\xi)^{-1} \approx t^{-2} \xi^{-1}$.  The contribution
of the part with phase $e^{i\xi(r+t)}$ is better than above, while the contribution 
of the part with phase $e^{i \xi(r-t)}$ is of the form
\[
\int_0^\infty  \chi_{\ges t^{-1}}(\xi)  a(\xi)
t^{-\frac12}  t^{-1} (t\xi)^{-1}   e^{i\xi(t-r)} \frac{\xi^{\frac52}}{\xi^2 \log \xi} d\xi 
\approx  \frac{1}{t^\frac52 \la t-r \ra^\frac12 \log \la t-r\ra} 
\]
as desired.

b)  We find $u^{nl}$  from the equation \eqref{bunl}  using a fixed point argument 
in the Banach space $Z^{nl}$  with norm
\[
\| f\|_{Z^{nl}} = \|h_1^{-1} t^{1.5} f \|_{L^\infty}
\]
Denoting by $Z^l$ the Banach space of functions of the form $\bu^{l,r} + \bu^{l,nr}$
 with norm as in \eqref{bubound}-\eqref{bulr}, we will show that the map
\[
T: (u,v) \to  L^{-1} N(u,v) = h_1(r) \int_r^\infty \frac{N(u, v)}{h_1(s)} ds
\]
is locally Lipschitz from $Z^l \times Z^{nl}$ into $Z^{nl}$, with a Lipschitz 
constant which can be made small if either both arguments are small or
$v$ is small  and the time $t$ is large enough, depending on the size of $u$.
This would imply the existence and uniqueness of $\bu^{nl}$, as well as 
its Lipschitz dependence on $\bu^l$ and implicitly on $(w_0,w_1)$.
Recall that 
\[
N(u,v) = \frac1r \big[  \sin{Q} \cdot (\cos (u + v)-1) + \cos{Q} \cdot (\sin (u + v) - (u + v)) \big]
\]
Then
\[
\begin{split}
|N(u,v)| & \les \frac1{r^2+1} ( |u|^2 + |v|^2) +  \frac1r (|u|^3 + |v|^3) \\
|\nabla N(u,v)| & \les \frac1{r^2+1} ( |u| + |v|) +  \frac1r (|u|^2 + |v|^2) 
\end{split}
\]
Hence it remains to show that 
\[
\int_{0}^\infty \frac1{r} ( |u|^2 + |v|^2) +  \frac{r^2+1}{r^2} (|u|^3 + |v|^3) dr \lesssim 
t^{-1.5} (\| u\|_{Z^l}^2 + \| v \|_{Z^{nl}}^2 + \| u\|_{Z^l}^3 + \| v \|_{Z^{nl}}^3)
\]
For $u$ we have two components $u^r$ and $u^{nr}$, therefore we need 
to consider the following six integrals:
\[
 \begin{split}
\int_0^\infty  \frac1{r}  |u^r|^2  dr \lesssim &
 \int_0^\infty \frac1{r} \frac{h_1^2(r)}{(t \log^2 t)^2} dr  
 \cdot \| u\|_{Z^l}^2  \approx \frac{1}{t^2 \log^4 t}   \| u\|_{Z^l}^2
\\
\int_0^\infty  \frac1{r}  |u^{nr}|^2  dr \lesssim &
 \int_0^\infty \frac1{r} \frac{r^2}{(t+r)^2 t \la t-r \ra^3  \log^2\la  t -r \ra} 
dr  \cdot \| u\|_{Z^l}^2  \approx \frac{1}{t^2}   \| u\|_{Z^l}^2
\\
\int_0^\infty  \frac1{r}  |v|^2  dr \lesssim &
 \int_0^\infty \frac1{r} h_1^2(r)  t^{-3} 
dr \cdot \| v\|_{Z^{nl}}^2  \approx \frac{1}{t^3}   \| v \|_{Z^{nl}}^2
\\
\int_0^\infty  \frac{r^2+1}{r^2}  |u^r|^3  dr \lesssim &
 \int_0^\infty  \frac{r^2+1}{r^2}  \frac{h_1^3(r)}{(t \log^2 t)^3} 
dr \cdot \| u\|_{Z^l}^3  \approx \frac{1}{t^3 \log^6 t}   \| u\|_{Z^l}^3
\\
\int_0^\infty  \frac{r^2+1}{r^2}   |u^{nr}|^3  dr \lesssim &
 \int_0^\infty  \frac{r^2+1}{r^2}\frac{r^3}{(t+r)^3 t^{\frac32} \la t-r \ra^\frac92  \log^3\la  t -r \ra} 
dr  \cdot \| u\|_{Z^l}^3  \approx \frac{1}{t^{1.5}}   \| u\|_{Z^l}^3
\\
\int_0^\infty   \frac{r^2+1}{r^2}|v|^3  dr \lesssim &
 \int_0^\infty  \frac{r^2+1}{r^2}  h_1^3(r)  t^{-4.5} 
dr  \cdot \| v\|_{Z^{nl}}^3  \approx \frac{1}{t^{4.5}}   \| v \|_{Z^{nl}}^3
\end{split}
\]
We remark that the worst decay $t^{-1.5}$ comes from the fifth integral above;
all other terms are better.

The argument for $\partial_t \bu^{nl}$ is more involved.  Differentiating the equation 
\eqref{nleq} we obtain
\begin{equation}\label{longdtunl}
\begin{split}
L (\partial_t \bu^{nl}+\frac{h_1}6(\bu^l)^3) = &\  N_u( \bu^{l},\bu^{nl}) \partial_t \bu^{l} +  N_v
( \bu^{l},\bu^{nl}) \partial_t \bu^{nl} + \frac{h_1}6 \partial_r (\bu^l)^3
\\
= & \ N_v( \bu^{l},\bu^{nl}) (\partial_t \bu^{nl}+\frac{h_1}6(\bu^l)^3) +
 [N_u( \bu^{l},\bu^{nl}) - \frac{ h_1}2 (\bu^l)^2]  \partial_t \bu^{l}  \\ &\
-\frac16 N_v( \bu^{l},\bu^{nl}) h_1(\bu^l)^3 + 
\frac{h_1}{6} (\partial_t + \partial_r) (\bu^l)^3
\end{split}
\end{equation}
The approach is similar to what we have done before. We adjust the base space to 
\[
\| f\|_{\tilde Z^{nl}} = \|h_1^{-1} t^{2} f \|_{L^\infty}
\]
and continue with the same steps. By the previous computation the first term on the right is perturbative.
The main cancellation occurs in the second term, where the $(\bu^l)^2$ term 
disappears. Precisely, we have
\[
N_u(u,v) - \frac12 
h_1 u^2 =  \frac{2}{1+r^2} \sin(u+v) - \frac{1-r^2}{r(1+r^2)} (1 -\cos(u+v)) - 
\frac{r}{1+r^2} u^2  
\]
therefore
\[
|N_u(u,v) - \frac12 h_1 u^2| \lesssim \frac{1}{1+r^2}(|u|+|v|) + \frac{1}r (|u|^3+ |u||v|+|v|^2) 
+ \frac{1}{r(1+r^2)} |u|^2 
\]
For $  \partial_t \bu^{l}$ we use the same bounds as for $\bu^l$.
Then, compared with  the previous computation, we need to reestimate the terms 
involving $|u|^3$, $|u||v|$ and $|u|^2$.  The resonant part of $u$ yields 
better bounds, so we only estimate terms involving $u^{nr}$:
\[
 \begin{split}
\int_0^\infty  \frac{r^2+1}{r^2}  |u^{nr}|^4  dr \lesssim &\   \| u\|_{Z^l}^4 \cdot 
 \int_0^\infty \frac{r^2+1}{r^2} \frac{r^4}{(t+r)^4 t^2 \la t-r \ra^{6}  \log^4\la  t -r \ra} 
dr   \approx \frac{1}{t^2}   \| u\|_{Z^l}^4
\\
\int_0^\infty  \frac{r^2+1}{r^2}  |u^{nr}|^2 |v| \  dr \lesssim &\   \| u\|_{Z^l}^2 \|v\|_{Z^{nl}} \cdot 
 \int_0^\infty  \frac{r^2+1}{r^2} \frac{r^2}{(t+r)^2 t^{2.5} \la t-r \ra^{3}  \log^2\la  t -r \ra} 
dr   \approx \frac{1}{t^{2.5}}    \| u\|_{Z^l}^2 \|v\|_{Z^{nl}}
\\
\int_0^\infty  \frac1{r^2}  |u^{nr}|^3  dr \lesssim &\   \| u\|_{Z^l}^3 \cdot 
 \int_0^\infty \frac{1}{r^2} \frac{r^3}{(t+r)^3 t^{1.5} \la t-r \ra^{4.5}  \log^3\la  t -r \ra} 
dr   \approx \frac{1}{t^{3.5}}   \| u\|_{Z^l}^4
\end{split}
\]
The third term on the right in \eqref{longdtunl} is better behaved than the second.
Finally, for the last term in \eqref{longdtunl} we invoke \eqref{ulrt} so that we use the same bounds for 
$(\partial_t + \partial_r) (\bu^l)$ as for $r^{-1} \bu^l$. Then the integral to estimate
is 
\[
\int_0^\infty  \frac1{r}  |u|^3  dr \lesssim \frac{1}{t^{2.5}}   \| u\|_{Z^l}^3
\]

c) In the case of $\bu^l$ this part follows from the linearity. In the case 
of $\bu^{nl}$ the Lipschitz dependence on $\bu^l$ has already been discussed above.
An additional argument is required for $\delta \partial_t \bu^{nl}$. However, nothing new happens there, and the details are left for the reader.
% where the statement reads
% \begin{equation} \label{dunl}
% |\delta \bu^{nl}(r,t)| \les h_1(r) t^{-1.5} \| \delta f \|_{S}, 
% \quad \delta \partial_t \bu^{nl}= h_1 \big[(\bu^l + \delta \bu^l)^3 - (\bu^l)^3 \big] + g, 
% \quad |g| \les h_1(r) t^{-2} \| \delta f \|_{S}
% \end{equation}

\end{proof}

\section{The transition between $\tw$ and $\tu$}
\label{s:transition}

In this section study the transition from $\tw$ to $\tu$, which were
both introduced in \eqref{egdef}. This transition is described by
\eqref{ge}, which we recall for convenience
\[
\tw = \partial_r \tu  - \frac{\sin(\tu + \bu) - \sin \bu}r
\]
The main result of this section is the following

\begin{proposition} \label{propeg} a) Assume that $\tw \in LX$ is small
  and $\bu,\bw$ are as in Proposition \ref{lul}.  Then
  for $t$ large enough there exists a unique solution $\tu \in X$ of
  \eqref{ge} which satisfies
\begin{equation} \label{geXLX}
\| \tu \|_X \lesss \| \tw \|_{LX} 
\end{equation}
Furthermore, $\tu$ has a Lipschitz dependence on both $\gamma$ and 
on the linear data $(w_0,w_1)$ for $\bw$,  
\begin{equation} \label{dgeXLX}
\| \delta \tu \|_X \lesss \| \delta \tw \|_{LX} + \frac{1}{t \log^2 t} 
\| (\delta w_0,\delta w_1) \|_S \| \tw \|_{LX},
\end{equation} 

Also, if $\tw$ is a function of $t$ then
 \begin{equation} \label{geXLXa}
 \| \partial_t \tu \|_X \lesss \| \partial_t \tw \|_{LX}+ \frac{1}{t \log^2 t} 
 \| \tw \|_{LX}
\end{equation}

b) Assume in addition that $\tw \in L^\infty$. Then
\begin{equation}
|\tu(r)| \lesss r \log r  \|\tw \|_{LX \cap L^\infty}, \qquad r \ll 1 
\label{tulowr}\end{equation}

\end{proposition}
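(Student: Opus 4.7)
The plan is to solve \eqref{ge} for $\tu \in X$ by a contraction mapping argument based on the isomorphism $L: X \to LX$ built into the definition of the spaces. Expanding $\sin(\bu+\tu) - \sin\bu = \cos\bu \cdot \tu + \cos\bu(\sin\tu - \tu) + \sin\bu(\cos\tu - 1)$ and separating the linear term $\cos Q \cdot \tu/r$, equation \eqref{ge} can be rewritten as
\begin{equation*}
L\tu = \tw + R_1(\tu) + R_2(\tu),
\end{equation*}
where
\begin{equation*}
R_1(\tu) = \frac{\cos\bu - \cos Q}{r}\,\tu, \qquad R_2(\tu) = \frac{\cos\bu(\sin\tu - \tu) + \sin\bu(\cos\tu - 1)}{r}.
\end{equation*}
Since $L : X \to LX$ is an isometric isomorphism, this is the fixed-point problem $\tu = L^{-1}\tw + L^{-1}\bigl(R_1(\tu)+R_2(\tu)\bigr)$ on a small ball of $X$, with the first term contributing exactly $\|\tw\|_{LX}$ to $\|\tu\|_X$.

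To close the fixed point I estimate $R_1,R_2$ in $L^1 \cap L^2 \subset LX$ via \eqref{LXemb}. For $R_1$: $|\cos\bu - \cos Q| \lesssim |\bu^l + \bu^{nl}|$, and the pointwise bounds of Proposition~\ref{lul} give a decay of at least $(t\log^2 t)^{-1}$ uniformly in $r$ for the quantity $|\bu - Q|/r$, with the slowest-decaying piece coming from the resonant part $\bu^{l,r}$; combined with $|\tu| \lesssim \langle r\rangle^{-1/2}\|\tu\|_X$ from \eqref{pointX} this yields $\|R_1(\tu)\|_{L^1 \cap L^2} \lesss (t\log^2 t)^{-1}\|\tu\|_X$. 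For $R_2$ the pointwise bound $|R_2(\tu)| \lesssim h_1|\tu|^2/r + |\tu|^3/r$, together with \eqref{pointX} and the $L^4$ embedding \eqref{linX4}, gives $\|R_2(\tu)\|_{L^1 \cap L^2} \lesssim \|\tu\|_X^2(1 + \|\tu\|_X)$. For $t$ large and data small this produces a contraction and hence the unique $\tu$ satisfying \eqref{geXLX}.

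The Lipschitz estimate \eqref{dgeXLX} follows by differentiating the fixed-point identity: $L\delta\tu = \delta\tw + (\partial_\tu R)(\delta\tu) + (\partial_\bu R)(\delta\bu)$. The first two terms absorb as in the existence step and contribute $\lesss \|\delta\tw\|_{LX}$. The $(\partial_\bu R)(\delta\bu)$ term uses Proposition~\ref{lul}(c); its slowest decay rate $(t\log^2 t)^{-1}\|(\delta w_0,\delta w_1)\|_S$, coming from $\delta\bu^{l,r}$, paired with factors of size $\|\tw\|_{LX}$ inherited from $\tu$, gives precisely the stated extra term. The time-derivative bound \eqref{geXLXa} is completely analogous after differentiating in $t$ and invoking the $\partial_t\bu$ estimates of Proposition~\ref{lul}. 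For part (b), use the explicit inverse $\tu(r) = h_1(r)\int_r^\infty h_1(s)^{-1}(\tw(s) + R(\tu)(s))\,ds$. For $r \ll 1$, $h_1(r) \sim 2r$; splitting the integral at $s=1$ and using $h_1(s)^{-1} \sim (2s)^{-1}$ on $(r,1)$ gives $\int_r^1 |\tw|/h_1\,ds \lesss \|\tw\|_{L^\infty}\log(1/r)$, which produces the $r\log r$ factor. The tail integral on $(1,\infty)$ is controlled by $\|\tw\|_{LX}$ using a Littlewood-Paley decomposition of $\tw$ in the $\tilde H$-calculus and the pointwise bound \eqref{ps0}; the $R(\tu)$-contribution is of lower order using the smallness of $\tu$ in $X\cap L^\infty$ from part (a).

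The main technical obstacle is the detailed estimation of $R_1$ in $LX$: the three components of $\bu - Q$ have quite different spatial profiles (resonance-shaped for $\bu^{l,r}$, concentrated near the light cone for $\bu^{l,nr}$, and quickly decaying for $\bu^{nl}$), and each must be separately paired with pointwise/integral bounds on $\tu$ extracted from the $X$-norm. Verifying uniform $t$-decay of the total linear coefficient across the inner region, light-cone region, and far tail is the delicate step.
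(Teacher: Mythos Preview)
Your approach to part (a) is essentially the same as the paper's: rewrite \eqref{ge} as $L\tu = \tw + F(\tu,\bu-Q)$, estimate the nonlinear remainder in $L^1\cap L^2 \subset LX$, and close a contraction in $X$. The paper uses mainly the weighted embedding \eqref{linX} rather than the pointwise bound \eqref{pointX} and the $L^4$ bound \eqref{linX4}, but both routes work; your claim that the slowest time decay $(t\log^2 t)^{-1}$ comes from the resonant piece $\bu^{l,r}$ is exactly what the paper finds. The Lipschitz and $\partial_t$ bounds follow in the same way.

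There is, however, a genuine gap in your treatment of part (b). You invoke the explicit inverse $\tu(r)=h_1(r)\int_r^\infty h_1^{-1}(\tw+R)\,ds$ and then claim that the tail $\int_1^\infty \tw/h_1\,ds$ is controlled by $\|\tw\|_{LX}$ via the pointwise bound \eqref{ps0}. This fails: for a dyadic piece $\tw_k$ at low frequency $2^k$, \eqref{ps0} only gives $|\tw_k(s)|\lesssim 2^{k/2}s^{-1/2}\|\tw_k\|_{L^2}$ for $s\gg 2^{-k}$, while $h_1(s)^{-1}\sim s/2$, so the integrand behaves like $s^{1/2}$ at infinity and the integral diverges absolutely. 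In other words, the pointwise formula $h_1\int_r^\infty(\cdot)/h_1$ is not in general the $X$-inverse of $L$ on $LX\cap L^\infty$.

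The fix is simple and is what the paper does: instead of integrating from infinity, use the value $\tu(1)$ (which is finite by part (a) and the embedding \eqref{pointX}) as Cauchy data and solve the ODE on $(0,1]$. Equivalently, in your formulation the ``tail'' is exactly $\tu(1)/h_1(1)$, already bounded by $\|\tw\|_{LX}$ from part (a); no separate Littlewood--Paley argument is needed or available. With this correction your bootstrap on $(0,1]$ goes through and yields \eqref{tulowr}.
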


\begin{proof} a) The equation \eqref{ge} is rewritten as
\begin{equation} \label{twef}
L \tu  = \tw+  \frac{ \sin(\tu + \bu) - \sin \bu - \cos Q \cdot \tu}{r} :=
\tw +  F(\tu, \bu-Q)
\end{equation}
Hence in order to prove both \eqref{geXLX} and \eqref{dgeXLX} it suffices 
to show that at fixed large enough time the map $F$ is Lipschitz 
\[
F: X \times (Z^l+Z^{nl}) \to LX
\]
with a small Lipschitz constant in the second variable. 
For the $X$ norm we use the embeddings \eqref{Xembt}-\eqref{linX4}. 
For the $LX$ norm we use \eqref{LXemb}, which shows that is enough to estimate 
$F(\bu,\tu)$ in $L^1 \cap L^2$.  We expand $F$ as follows:
\[
\begin{split}
F(\beta,v) & = \frac{ \sin(\beta + Q+ v) - \sin (Q+v) - \cos Q \cdot \beta}{r} \\
& = \frac{ (\cos (Q+v) - \cos Q) \cdot \beta}{r}
- \frac{ \sin (Q+v) \cdot \beta^2}{2r} + \frac{O(\beta^3)}{r} \\
& = \frac{ \sin Q \cdot v \beta}{r}
+ \frac{ \sin Q \cdot \beta^2}{2r} 
+ \frac{O (v^2 \beta)}{r}
+ \frac{O(\beta^3)}{r}  \\
\end{split}
\]
Hence
\begin{equation}\label{Feu}
|F(\beta,v)| \lesssim  \frac{|v| |\beta|}{1+r^2}
+ \frac{|\beta|^2}{1+r^2} + \frac{|\beta|^3}{r} + \frac{|v|^2 |\beta|}{r} \\
\end{equation}
By using \eqref{linX}, \eqref{Xembt} and \eqref{bulr}, we bound this first in $L^2$
\[
\begin{split}
\| F(\beta,v)\|_{L^2} \lesssim & \  \|\frac{\beta}{\log(1+ r)}\|_{L^2}
\left( \| \beta \|_{L^\infty} + \| \beta \|_{L^\infty}^2 + \|\frac{v}{1+r}\|_{L^\infty} 
+\|\frac{v^2\log(1+ r)}{r} \|_{L^\infty} \right)
\\ \lesssim & \ \| \beta \|_{X}^2+ \| \beta \|_{X}^3 + \| \beta \|_{X}( \frac{1}{t \log^2 t}
\| v \|_{Z^l+Z^{nl}} + \frac{\log t}{t^2}\| v \|_{Z^l+Z^{nl}}^2)
\end{split}
\]
and then in $L^1$,
\[
\begin{split}
\| F(\beta,v)\|_{L^1} \lesssim & \  \|\frac{\beta}{\log(1+ r)}\|_{L^2}^2
(1+\|\beta\|_{L^\infty})\\ &  +  \|\frac{\beta}{\log(1+ r)}\|_{L^2} 
(\|\frac{ v \log (1+r)}{1+r^2}\|_{L^2} + \| \frac{v^2\log(1+ r)}{r} \|_{L^2})
\\ \lesssim & \ \| \beta \|_{X}^2+  \| \beta \|_{X}^3 + \| \beta \|_{X}
(  \frac{1}{t \log^2 t}
\|v \|_{Z^l+Z^{nl}} + \frac{\log t}{t^\frac32}\|v\|_{Z^l+Z^{nl}}^2)
\end{split}
\]
Hence we obtain 
\[
\| F(\beta,v)\|_{LX} \lesssim \| \beta \|_{X}^2+  \| \beta \|_{X}^3 + \| \beta \|_{X}
(  \frac{1}{t \log^2 t}
\|v \|_{Z^l+Z^{nl}} + \frac{\log t}{t^\frac32}\|v\|_{Z^l+Z^{nl}}^2)
\]
A similar analysis yields
\[
\|\beta_1 F_{\beta}(\beta,v)\|_{LX} \lesssim \| \beta_1\|_{X}(\| \beta\|_{X}+  \| \beta \|_{X}^2 +
 \frac{1}{t \log^2 t} \|v\|_{Z^l+Z^{nl}} + \frac{\log t}{t^\frac32}\|v\|_{Z^l+Z^{nl}}^2)
\]
respectively 
\[
\|v_1 F_{v}(\beta,v)\|_{LX} \lesssim \| v_1\|_{Z^l+Z^{nl}}  \| \beta \|_{X}(  
 \frac{1}{t \log^2 t}  + \frac{\log t}{t^\frac32}\|v\|_{Z^l+Z^{nl}})
\]
 By the contraction principle this proves both \eqref{geXLX} and \eqref{dgeXLX}.
The time decaying factors guarantee that for any size of $\bu-Q$ the problem can be solved 
for large enough time. 

To prove \eqref{geXLXa} we differentiate with respect to $t$ in \eqref{twef},
\[
L \partial_t \tu = \partial_t \tw + F_\tu(\tu,\bu) \partial_t \tu +
 F_{\bu}(\tu,\bu) \partial_t \bu
\]
Since $\partial_t \bu$ satisfies the same pointwise bounds as $\bu$,
the last two estimates above show that the contraction principle still
applies.

b) Due to the embedding $X \subset \dot H^1_e \subset L^\infty$ we already have 
a small uniform bound for $\tu$. We solve the ode \eqref{twef} in $[0,1]$ with 
Cauchy data at $r=1$. Making the bootstrap assumption 
\begin{equation}
|\tu| \leq M r |\log (r/2)|
\label{boot:tu}\end{equation}
we rewrite the equation \eqref{twef} 
in the form 
\[
|L\tu - \tw | \leq M^3 r^2 |\log^3(r/2)| + C,
\qquad C \approx_S \|\tu\|_{L^\infty}
\]
Then solving the linear $L$ evolution we have
\[
|\tu| \lesssim r (|\tw(1)| + M^3) +  C r |\log (r/2)|
\lesssim_S M^3 r +  r |\log (r/2)|\|\tu\|_{L^\infty}
\]
If $\|\tu\|_{L^\infty}$ is sufficiently small then we can 
choose $M$ small enough so that the above bound is stronger than
our bootstrap assumption \eqref{boot:tu}. The proof of \eqref{tulowr}
is concluded.
 
\end{proof}

\section{The nonlinearity in the $\gamma$ equation}
\label{s:nonl}

Our main goal is to solve the equation \eqref{geq} for $\gamma$ with
zero Cauchy data at $t=\infty$. For the linear $\tilde H$ wave
equation we use the $LX$ bounds in Lemma~\ref{l:Kest}. The auxiliary
function $\epsilon$ is uniquely determined by $\gamma$ via
Proposition~\ref{propeg}.  In this section we estimate the nonlinear
contribution in \eqref{geq}, namely
\[
N(w,u)=\frac{2(\cos Q- \cos u)}{r^2} w + \frac{1}{r} \sin u (u_t^2 - w^2)
\]
In light of the decompositions $u=\bu + \tu, w = \bw + \tw$, this nonlinearity has 
three types of contributions,
\[
N(w,u) = N(\bw,\bu) + N^l(\bw,\bu,\tw,\tu) + N^n(\bw,\bu,\tw,\tu)
\]
The first one, $N(\bw,\bu)$, should be seen as an inhomogeneous term.
The reason we need to consider this separately is that $\bu, \bw$ have
a different behavior compared to $\tu, \tw$ as $t$ goes to infinity.
The term $N^l$ contains the linear contributions in $\tu,\tw$ in the
difference $N(w,u) - N(\bw,\bu)$,
\[
N^l= \frac{2(\cos Q- \cos \bu)}{r^2} \tw+ \frac{2 \sin \bu \cdot \tu}{r^2} \bw 
+ \frac{\sin \bu (2 \bu_t \tu_t - 2 \bw \tw) + \cos \bu \cdot \tu(\bu_t^2 - \bw^2)}{r}
\]
The remaining term $N^l$ contains the genuinely nonlinear
contributions in $\tu,\tw$ in the difference $N(w,u) - N(\bw,\bu)$,
\[
\begin{split}
N^n= &\frac{2(\cos \bu - \cos u-\sin \bu \cdot \tu)}{r^2} \bw + \frac{2(\cos \bu - \cos(\bu + \tu))}{r^2} \tw 
+ \frac{\sin \bu (\tu_t^2 - \tw^2)}{r} \\
& + \frac{(\sin u - \sin \bu)(2 \bu_t \tu_t - 2 \bw \ga + \bu_t^2 - \bw^2)}r
+\frac{(\sin u - \sin \bu - \cos \bu \cdot \tu)(\bu_t^2 - \bw^2)}{r}
\end{split}
\]
Our main result is the following
\begin{proposition} \label{p:mn}

a) If $\bu,\bw$ are  as in Proposition~\ref{lul}  then for $t \gtrsim_S 1$ we have
\begin{equation} \label{Nbubw}
t^{1.5} \| K N(\bu,\bw) \|_{LX}  
+  t^{2.5} (\| \partial_t K N(\bu,\bw) \|_{LX} + \| K N(\bu,\bw) \|_{\dHe} ) \lesss  1
\end{equation}
with Lipschitz dependence on the initial data $(w_0,w_1)$ for $\bw$.

b) If we assume the following
\[
\sup_t t^{1.5} (\| \tw(t) \|_{LX}  + \| \tu(t) \|_{X})  
+ \sup_t t^{2.5} (\| \partial_t \tw(t) \|_{LX} + \| \tw \|_{\dHe} + \| \partial_t \tu(t) \|_{X}) \les M
\]
then
\begin{equation} \label{NLest}
t^{1.5}  \| K N^l \|_{LX}   +  t^{2.5} (\| \partial_t K N^l \|_{LX} + \| K N^l \|_{\dHe}) 
\lesss  M  (\log t)^{-2}
\end{equation}
\begin{equation} \label{NNest}
t^{1.5} \| K N^n \|_{LX}+  t^{2.5} (\| \partial_t K N^n \|_{LX} + \| K N^n \|_{\dHe}) 
\lesss  (M^2+M^3) t^{-.5} \log t
\end{equation}
In addition, the maps $(\tu,\tw) \to (KN^l,KN^n)$ satisfy similar Lipschitz bounds.
\end{proposition}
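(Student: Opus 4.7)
The plan is to reduce every one of the three bounds to a pointwise-in-$s$ estimate on the relevant nonlinearity in $LX$, and then integrate against the $K$-kernel by hand. From the formulas for $K$ and $\partial_t K$ displayed just above Lemma~\ref{l:Kest} one has directly $\|KN(t)\|_{LX}\les \int_t^\infty(s-t)\|N(s)\|_{LX}\,ds$ and $\|\partial_t KN(t)\|_{LX}\les \int_t^\infty\|N(s)\|_{LX}\,ds$, while the $\dHe$ output is controlled by the third clause of Lemma~\ref{l:Kest}. Consequently it is enough to produce the pointwise estimates
\[
\|N(\bar u,\bar w)(s)\|_{LX}\lesss s^{-7/2},\qquad \|N^l(s)\|_{LX}\lesss M\,s^{-7/2}(\log s)^{-2},\qquad \|N^n(s)\|_{LX}\lesss (M^2+M^3)\,s^{-4}\log s,
\]
together with the matching bounds for $\partial_t N^l$ and $\partial_t N^n$, since each of these integrates to the stated target rate.

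To handle the $LX$ norm one uses the embedding $L^1(rdr)\cap L^2(rdr)\subset LX$ from \eqref{LXemb}; any factor coming from $(\tilde u,\tilde w)$ is placed in $L^\infty$ via the chain $X\subset\dHe\subset L^\infty$ and \eqref{pointX}--\eqref{linX4}, and any factor coming from $(\bar u,\bar w,\bar u_t)$ is placed in $L^1(rdr)\cap L^2(rdr)$ using the pointwise bounds \eqref{bwpoint}, \eqref{bulr}, \eqref{unl}. For part~(a) one expands $\cos Q-\cos\bar u$ and $\sin\bar u$ around $\bar u=Q$, splits $\bar u-Q=\bar u^{l,r}+\bar u^{l,nr}+\bar u^{nl}$, and estimates each resulting monomial against the $r$-weights $h_1/r^2$ or $h_1/r$ carried by $N$. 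The factor $h_1(r)$ provides integrability at the origin, and the $\la s-r\ra^{-5/2}$ decay of $\bar w$ together with the $\la s+r\ra^{-1}(\log s)^{-2}$ decay of $\bar u^{l,r}$ just suffices to reach $s^{-7/2}$ provided one invokes the refined cancellation $\partial_t\bar u^{nl}+\tfrac{1}{6}h_1(\bar u^l)^3=O(h_1 s^{-2})$ from Proposition~\ref{lul}(b) when dealing with the term $\tfrac{\sin\bar u}{r}\bar u_t^2$. Lipschitz dependence on $(w_0,w_1)$ then follows immediately from part~(c) of Proposition~\ref{lul}.

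Part~(b) is the genuinely novel estimate. Each summand of $N^l$ is linear in one of $\tilde u,\tilde w,\partial_t\tilde u,\partial_t\tilde w$ and linear in one of $\bar u-Q,\bar u_t,\bar w$. Placing the $\tilde{\phantom u}$-factor in $L^\infty$ with the assumed $M s^{-3/2}$ decay produces the factor $M$, and the $(\log s)^{-2}$ improvement is extracted from the resonant weight $\la s\ra^{-1}(\log s)^{-2}$ attached to $\bar u^{l,r}$ and to the resonant part of $\partial_t\bar u$ in \eqref{bulr}; after the expansion this factor is present in every coefficient of $N^l$. The time-derivative bound for $\partial_t KN^l$ is identical, because $\partial_t\bar u,\partial_t\bar w$ satisfy the same pointwise estimates as their undifferentiated versions modulo an extra $s^{-1}$, and the hypothesis gives $\|\partial_t\tilde u\|_X+\|\partial_t\tilde w\|_{LX}\lesss M s^{-5/2}$. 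Part~(c) is simpler in structure: each term of $N^n$ is quadratic or cubic in $(\tilde u,\tilde w)$, so two $L^\infty$-factors already contribute $M^2 s^{-3}$, and the remaining $\bar u,\bar w$ piece is bounded uniformly in $s$ up to $\log s$, which yields the asserted bound. Lipschitz dependence of $N^l,N^n$ on $(\tilde u,\tilde w)$ and $(w_0,w_1)$ follows by differentiating the defining expressions and reusing the same $L^1\cap L^2$ machinery, combined with Propositions~\ref{lul}(c) and~\ref{propeg}.

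The principal obstacle is the sharpness of the accounting in part~(a): naive pointwise bounds on the individual terms of $N(\bar u,\bar w)$ can fail the threshold $s^{-7/2}$ by a fractional power of $s$ if one ignores the cancellation in Proposition~\ref{lul}(b) or the logarithmic weights in \eqref{bwpoint}, so both must be used in an essential way. A secondary subtlety lies in keeping the $(\log s)^{-2}$ gain throughout part~(b): one must verify that every contribution to $N^l$ truly carries this logarithmic improvement, which forces one to keep track of the resonant versus nonresonant decomposition of $\bar u-Q$ and $\partial_t\bar u$ term by term rather than relying on the cruder pointwise bounds alone.
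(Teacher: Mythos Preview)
Your overall architecture (reduce to fixed-time $LX$ bounds on the nonlinearity, then feed into Lemma~\ref{l:Kest}) matches the paper, but there is a genuine gap in part~(a), and a related one in the $N^l$ estimate.

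In part~(a) you write that naive pointwise bounds ``can fail the threshold $s^{-7/2}$ by a fractional power'' and propose to repair this with the cancellation $\partial_t\bar u^{nl}+\tfrac16 h_1(\bar u^l)^3=O(h_1 s^{-2})$ from Proposition~\ref{lul}(b). In fact the deficit near the cone $\{r\approx t\}$ is two full powers: the direct bound gives only $\|N_3\|_{L^1\cap L^2}\lesss t^{-1.5}$. The cancellation you invoke is irrelevant here, since $\bar u^{nl}$ and $\partial_t\bar u^{nl}$ are already of size $h_1 t^{-1.5}\approx t^{-2.5}$ near the cone and play no role in the leading term. What is actually needed is (i) the relation \eqref{ulrt}, equivalently $\partial_t\bar u+\bar w=O_S(t^{-3/2}\la t-r\ra^{-3/2})$, which gives a first cancellation in $\bar u_t^2-\bar w^2$, and then (ii) an algebraic expansion showing that the surviving leading part of $N$ near the cone is exactly $\tfrac{1}{2r^3}\partial_r(\bar u^l)^2=Lg$ with $g=\chi_{r\approx t}\,(2r^3)^{-1}(\bar u^l)^2$. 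This $Lg$ term cannot be placed in $L^1\cap L^2$ at the required rate; instead one must use $\|Lg\|_{LX}=\|g\|_X\lesssim\|g\|_{H^1_e}$ via \eqref{Xembt}, which finally yields $t^{-3.5}$. Your plan to bound everything through $L^1\cap L^2\subset LX$ therefore cannot close.

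The same mechanism reappears in $N^l$: the term $\tfrac{\sin\bar u}{r}\,\bar w\,\tw$ near the cone is handled in the paper by writing $\chi_{r\approx t}\,r^{-1}\bar w\sin\bar u=L(\chi_{r\approx t}r^{-1}(\bar u^l)^2)+O_S(t^{-2.5}\la t-r\ra^{-2.5})$ and then estimating the $L(\cdot)\,\tw$ piece using $\|\tw\|_{\dot H^1_e}\lesssim Mt^{-2.5}$ from the hypothesis. Your proposal never invokes the $\dot H^1_e$ control on $\tw$, nor the $H^1_e\subset X$ route into $LX$, so this term is not covered either.
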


When combined with Proposition \ref{propeg} this result allows us to
treat the problem in $\tw,\tu$ perturbatively. The additional gains in
$t$ decay in \eqref{NLest} and \eqref{NNest} allows us to consider
large Schwartz perturbations of the soliton. We note that in the case 
of $KN^l$ we gain only logarithms. This implies that for large 
Schwartz data $(w_0,w_1)$ in the linear equation our solutions 
are only defined for $t > T$ with $T$ exponentially large. 

\subsection{The term $N(\bw,\bu)$.} Our goal here is to prove the
estimate
\begin{equation}\label{nbubw}
\| N(\bw,\bu)\|_{LX} \lesss t^{-3.5}
\end{equation}
Then the bound \eqref{Nbubw} is a consequence of 
Lemma~\ref{l:Kest}. To prove this we split
\[
N(\bw,\bu) = \chi_{r \ll t} N(\bw,\bu) + \chi_{r \gg t} N(\bw,\bu) + 
\chi_{r \approx t} N(\bw,\bu)=N_1 + N_2 + N_3
\]
For the first two terms it suffices to use a direct estimate
\[
|N(\bw,\bu)| \lesssim \frac{\sin Q}{r^2} |\bu-Q||\bw| + \frac{1}{r^2} |\bu-Q|^2|\bw|
+ \frac{1}{r} (\sin Q +|\bu|)(|\bu_t|^2 + |\bw|^2) 
\]
Using the bounds \eqref{bulr} and \eqref{unl} for $\bu -Q$, as well as 
the bound \eqref{bwpoint} for $\bw$, this gives
\[
|N_1(\bw,\bu)| \lesss \chi_{r \ll t}  \frac{1}{\la r \ra^4  t^{4}}
\]
where the leading contribution comes from $u^{l,r}$.
This implies that 
\[
\| N_1 \|_{L^1 \cap L^2}  \lesss t^{-4}
\]
which suffices for \eqref{nbubw} in view of the embedding \eqref{LXemb}.
Similarly
\[
|N_2| \lesss \chi_{r \gg t}  \frac{1}{\la r \ra^{8}}
\]
which also gives 
\[
\| N_2 \|_{L^1 \cap L^2}  \lesss t^{-4}
\]

However, a similar direct computation for $N_3$ 
only gives 
\[
|N_3(\bw,\bu)| \lesss \chi_{r \sim t}  \frac{1}{  t^{2.5} \la t-r \ra^{5.5}}
\]
which fails by two units, 
\[
\| N_3 \|_{L^1 \cap L^2}  \lesss t^{-1.5}
\]
Hence in order to conclude the proof of \eqref{nbubw} we need to better exploit 
the structure of $N$ and capture a double cancellation on the null cone. In the computations below (through the
end of the subsection) we work in the regime $r \approx t$. We expand $N(\bw,\bu)$ as 
\[
\begin{split}
N(\bw,\bu) = & \  2 \frac{\sin Q}{r^2} (\bu-Q) \bw +  \frac{\cos Q}{r^2} (\bu-Q)^2 \bw
+ \frac{\sin Q}{r} (\bu_t^2-\bw^2) + \frac{\cos Q}{r} (\bu_t^2-\bw^2)(\bu -Q)
\\ & \ 
+ \frac{\sin Q}{r^2} w O((\bu-Q)^3) + \frac{\cos Q}{r^2} (w O((\bu-Q)^4)
\\ & \
+ \frac{\sin Q}{r}(\bu_t^2-\bw^2) O((\bu-Q)^2) + \frac{\cos Q}{r}(\bu_t^2-\bw^2) O((\bu-Q)^3)
\end{split}
\]
The terms on the second line are already acceptable, i.e. can be estimated by 
$ t^{-4.5} \la t-r \ra^{-3.5}$. For further progress we observe that 
by \eqref{unl} we have 
\[
\bu^{nl} = O_S( \la t \ra)^{-2.5}, \qquad   \partial_t \bu^{nl} = O_S(  t^{-2.5} \la t-r \ra^{-0.5})
\]
and that by \eqref{ulrt}   we can write
\begin{equation} \label{bcan}
\partial_t \bu  + \bw = \partial_t \bu^{nl} + \partial_t \bu^l + \partial_r \bu^l + \frac{\cos Q}r 
\bu^l = O_S( t^{-1.5} \la t-r \ra^{-1.5})
 \end{equation}
 The first relation above allows us to dispense with $\bu^{nl}$
 everywhere and replace $\bu-Q$ by $\bu^l$, and the second allows us
 to estimate the third line in $N(\bw,\bu)$. We are left with 
\[
N(\bw,\bu) =  2 \frac{\sin Q}{r^2} \bu^l \bw +  \frac{\cos Q}{r^2} (\bu^l)^2 \bw
+ \frac{\sin Q}{r} ((\bu^l_t)^2-\bw^2) + \frac{\cos Q}{r} ((\bu_t^l)^2-\bw^2) \bu^l
+ O_S(  t^{-4.5} \la t-r \ra^{-3.5}) 
\]
To advance further we substitute $\bw = \partial_r \bu^l - \frac{\cos Q}{r} \bu^l$
everywhere. The $\frac{\cos Q}{r} \bu^l$ is acceptable in the first two terms 
of $N$, i.e. it gives contributions of $O_S(  t^{-4.5} \la t-r \ra^{-3.5})$,
and we discard it. For the last two terms we use the better approximation from \eqref{ulrt}
\[
\bu_t^l = - \partial_r \bu^l - \frac{1}{2r} \bu^l + O( t^{-2.5} \la t-r \ra^{-0.5})
 \]
Then we can write
\[
\begin{split}
(\bu_t^l)^2-\bw^2 = & \ (\partial_r \bu^l + \frac{1}{2r} \bu^l)^2 - 
(\partial_r \bu^l - \frac{\cos Q}{r} \bu^l)^2
+ O_S( t^{-3} \la t-r \ra^{-3})
\\
= & \ - \frac{1}{r} \bu^l \partial_r \bu^l  + O_S(t^{-3} \la t-r \ra^{-3})
\end{split}
\]
It is also harmless to replace $\sin Q$ by $r^{-1}$ and $\cos Q$ by $-1$
everywhere. Returning to $N$ we obtain
\[
\begin{split}
N(\bw,\bu) = & \  \frac{2}{r^3} \bu^l \partial_r \bu^l -  \frac{1}{r^2} (\bu^l)^2 \partial_r \bu^l
- \frac{1}{r^3} u^l \partial_r u^l + \frac{1}{r^2}  (\bu^l)^2 \partial_r \bu^l
+ O_S( \la t \ra^{-4.5} \la t-r \ra^{3.5}) 
\\ = & \  \frac{1}{2r^3} \partial_r (\bu^l)^2 + O_S(  t^{-4.5} \la t-r \ra^{-3.5}) 
\end{split} 
\]
in the region $r \approx t$, which we rewrite as
\[
N_3 = Lg +  \chi_{r \approx t} O_S( t^{-4.5} \la t-r \ra^{-3.5}) , \qquad 
g =  \chi_{r \approx t}\frac{1}{2r^3} (\bu^l)^2 
\]
The last term can be directly estimated in $L^1 \cap L^2$. For the leading term 
$Lg$ we estimate $g$ in $H^1_e$ and use the embedding \eqref{Xembt}.
We have 
\[
|g| \lesss \frac{1} { t^{4} \la t-r \ra^{3}},  \qquad |\partial_r g| \lesss
 \frac{1} { t^{4} \la t-r \ra^{4}}
\]
therefore
\[
\| g\|_{H^1_e} \lesss   \frac{1}{ t^{3.5}} 
\]
This concludes the proof of \eqref{nbubw}.

\subsection{The bound for $N^l$.}
Our goal here is to establish the bound
\begin{equation} \label{nlinlx}
\| N^l\|_{LX} \lesss \frac{1}{t^{3.5} \log^2 t} M
\end{equation}
We recall that
\[
N^l= \frac{2(\cos Q- \cos \bu)}{r^2} \gamma+ \frac{2 \sin \bu \cdot \epsilon}{r^2} \bw 
+ \frac{\sin \bu (2 \bu_t \epsilon_t - 2 \bw \ga) + \cos \bu \cdot \epsilon(\bu_t^2 - \bw^2)}{r}
\]
The pointwise estimate
\[
|\frac{2(\cos Q- \cos \bu)}{r}| \les \frac{1}{r^2+1} |\bu -Q| + \frac1{r} |\bu-Q|^2 
\]
combined with the pointwise bounds for $\bu$ from \eqref{bulr} leads to
\[
\|\frac{2(\cos Q- \cos \bu)}{r}\|_{L^\infty \cap L^2} \lesss \frac{1}{t \log^2 t}
\]
with the worst contribution arising from the resonant part of $\bu$.
From \eqref{LXemb} it follows that
\[
\|\frac{2(\cos Q- \cos \bu)}{r^2} \ga \|_{LX} \les \|\frac{2(\cos Q- \cos \bu)}{r}\|_{L^\infty \cap L^2} 
\cdot \| \frac{\ga}{r} \|_{L^2} \lesss \frac{1}{t^{3.5} \log^2 t} M.
\]
Next, from \eqref{bulr} and \eqref{bwpoint}, it follows that
\[
\| \frac{\bu \cdot \bw}{r^2} \log(2+r) \|_{L^\infty \cap L^2} \lesss \frac{\log  t }{ t ^{2.5}}
\]
which combined with (recall \eqref{linX})
\[
\| \frac{\epsilon}{\log (2+r)} \|_{L^2} \les \| \epsilon \|_{X} \les  t^{-1.5} M
\]
gives
\[
\|\frac{2 \sin \bu \cdot \epsilon}{r^2} \bw \|_{LX} \lesss  \frac{\log t}{t^{4}} M
\]
Using \eqref{bulr} we obtain
\[
\| \frac{\bu \bu_t}{r} \log(2+r) \|_{L^\infty \cap L^2} \lesss  \frac{\log t} {t^{1.5}} 
\]
therefore by invoking \eqref{LXemb} and \eqref{linX}, it follows that 
\[
\| \frac{\sin(\bu) \cdot \bu_t \epsilon_t}{r} \|_{LX}  
\les \| \frac{\bu \bu_t}{r} \log(2+r) \|_{L^\infty \cap L^2} \| \frac{\epsilon_t}{\log(2+r)} \|_{L^2} 
\lesss \frac{\log t}{t^{4}} M
\]

The following term requires some extra work. Using \eqref{bulr} and \eqref{bwpoint}, we note that
away from the cone we have $|\sin (\bu)| \lesssim \sin Q$ and continue with
\[
\| \chi_{r \not \approx t} \frac{\bw \sin \bu}{r}\|_{L^1 \cap L^2}
\lesss t^{-2}
\]
followed by
\[
\|\chi_{r \not \approx t} \frac{\sin(\bu) \cdot \bw \ga}{r} \|_{LX} \les 
\| \chi_{r \not \approx t} \frac{\bu \bw}{r} \|_{L^1 \cap L^2} \| \ga \|_{L^\infty}  \lesss t^{-4.5} M
\]
Near the cone we write
\[
\begin{split}
\chi_{r  \approx t}  \frac{\bw \sin \bu}{r} = &\chi_{r  \approx t} \left( \ \frac{2\bw }{1+r^2} - \frac{\bw (\bu-Q)}{r} \cos{Q} 
+   \frac{\bw O((\bu-Q)^2)}{1+r^2} +  \frac{\bw O((\bu-Q)^3)}{r})\right)
\\
= & \ \chi_{r  \approx t} \frac{\bw (\bu-Q)}{r} + O_S( t^{-2.5} \la t-r\ra^{-2.5}) 
\\= & \  L (\chi_{r  \approx t}  r^{-1} (\bu^l)^2)  + O_S( t^{-2.5} \la t-r\ra^{-2.5}) 
\end{split}
\]
The second term is estimated as above in $L^1 \cap L^2$ and yields 
a contribution of $t^{-4} M$ to the $\|N^l\|_{LX}$ bound.
For the first term we write its contribution to $N^l$ in the form
\[
L (\chi_{r  \approx t}  r^{-1} (\bu^l)^2) \tw =  L(\chi_{r  \approx t}  r^{-1} (\bu^l)^2 \tw) +  
\chi_{r  \approx t} r^{-1} (\bu^l)^2 \partial_r \tw
\]
Then, using \eqref{Xembt} for the first term and \eqref{LXemb} for the second term,
we have
\[
\begin{split}
\| L (\chi_{r  \approx t}  r^{-1} (\bu^l)^2) \tw \|_{LX}
\lesssim & \ \| \chi_{r  \approx t}  r^{-1} (\bu^l)^2 \tw\|_{H^1_e} + \| \chi_{r  \approx t} r^{-1} (\bu^l)^2 \partial_r \tw\|_{L^1 \cap L^2}
\\
\lesssim & \ \| \chi_{r  \approx t}  r^{-1} (\bu^l)^2\|_{H^1_e} \| \tw\|_{\dot H^1_e} + 
\| \chi_{r  \approx t} r^{-1} (\bu^l)^2\|_{L^2 \cap L^\infty} \| \partial_r \tw\|_{L^2}
\\
\lesss & \ t^{-1.5} \| \tw\|_{\dot H^1_e} \lesss t^{-4} M
\end{split}
\]
It remains to bound the last term in $N^l$. For this we 
take advantage of the first order cancellation on the
cone in the expression $\bu_t - \bw$, see \eqref{bcan}, 
which combined with \eqref{bulr} and \eqref{bwpoint}, gives
\[
\|  \frac{\cos \bu(\bu_t^2 - \bw^2) \log (2+ r)}{r} \|_{L^2 \cap L^\infty} \lesss
\frac{\log t}{t^{2.5}}. 
\]
This leads to 
\[
\| \tu  \frac{\cos \bu(\bu_t^2 - \bw^2) }{r} \|_{L^1 \cap L^2}
\lesss \frac{\log t}{t^{2.5}} \| \frac{\tu}{ \log (2+r)}\|_{L^2} 
\lesss  \frac{\log t}{t^{2.5}} \| {\tu}\|_{X}  \lesss  \frac{\log t}{t^{4}}
M
\]
This concludes the proof of the $N^l$ bound \eqref{nlinlx}.

\subsection{The bound for $N^n$.} 
Our goal here will be to prove the bound
\begin{equation} \label{nn}
\|N^n\|_{LX} \lesss \frac{\log t}{t^4} (M^2+M^3)
\end{equation}
We recall
the expression of $N^n$:
\[
\begin{split}
N^n= &\frac{2(\cos \bu - \cos u-\sin \bu \cdot \tu)}{r^2} \bw + \frac{2(\cos \bu - \cos(\bu + \tu))}{r^2} \tw
+ \frac{\sin u (\tu_t^2 - \tw^2)}{r} \\
& + \frac{(\sin u - \sin \bu)(2 \bu_t \tu_t - 2 \bw \tw)}r
+\frac{(\sin u - \sin \bu - \cos \bu \cdot \tu)(\bu_t^2 - \bw^2)}{r}
\end{split}
\]

We successively consider the terms on the right. For the first one we start with
\[
| \frac{2(\cos \bu - \cos u-\sin \bu \cdot \tu)}{r^2} \bw | \les \frac{\tu^2 |\bw|}{r^2}.
\]
Then, using \eqref{bwpoint} and \eqref{linX}, we obtain
\[
\| \frac{\tu^2 \bw}{r^2} \|_{L^1 \cap L^2} \les 
\| \frac{\tu}{\log (2+r)} \|_{L^\infty \cap L^2} \| \frac{\tu}{\log(2+r)} \|_{L^2} 
\| \frac{\bw}{r^2} \log^2(2+r) \log \|_{L^\infty } 
\lesss  \frac{ \log^2 t}{t^{5.5}}  M^2
\]
The second term in $N^n$ is estimated by 
\[
|\frac{\cos \bu - \cos(\bu + \tu)}{r^2} \tw| \les \frac{|\sin \bu \cdot \tu \tw|}{r^2} 
+ \frac{ |\tu^2 \tw|}{r^2} 
\les \frac{|\tu \tw|}{r \la r \ra^2} + \frac{|(\bu-Q) \tu \tw|}{r^2} + \frac{ |\tu^2 \tw|}{r^2}.
\]
The first two terms can be estimated in $L^1 \cap L^2$ as before,
\[
\| \frac{\tu \tw}{r\la r\ra^2} \|_{L^1 \cap L^2} \les \| \frac{\tw}{r} \|_{L^2} \| \frac{\tu}{\la r \ra^2} \|_{L^\infty \cap L^2}
\lesss t^{-4} M^2
\]
\[
\|\frac{(\bu-Q) \tu \tw}{r^2}\|_{L^1 \cap L^2} \les \| \frac{\tw}{r} \|_{L^2} \| \frac{\bu-Q}{r} \|_{L^2 \cap L^\infty} \| \tu \|_{L^\infty}
\lesss t^{-5} M^2 
\]
For the last term we first get the $L^1$ bound
\[
\| \frac{\tu^2 \tw}{r^2} \|_{L^1} \les \| \tu \|_{L^\infty} \| \frac{\tu}{r} \|_{L^2} \| \frac{\tw}r \|_{L^2} 
\les \frac{ 1}{t^{5.5}} M^3
\]
However, getting the $L^2$ bound is more delicate:
\[
\| \frac{\tu^2 \tw}{r^2} \|_{L^2} \les  \| \frac{\tu}{\sqrt{r}} \|^2_{L^\infty} \| \frac{\tw}r \|_{L^2} 
\les \frac{ 1}{t^{5.5}} M^3
\]
where the pointwise bound for $\frac{\tu}{\sqrt{r}}$ near $r = 0$ comes from 
\eqref{tulowr}.

The third term in $N$ is estimated by using \eqref{bulr}
\[
| \frac{\sin u (\tu_t^2 - \tw^2)}{r}| \lesssim \frac{|\tu_t|^2}{1+r}+
 \frac{  |\tw^2|}{1+r}
\]
We successively consider all terms:
\[
\| \frac{|\tu_t|^2}{1+r}\|_{L^1 \cap L^2}  \lesssim 
\| \frac{\tu_t}{\log(2+r)}\|_{L^2 \cap L^\infty}
 \|\frac{\tu_t}{\log(2+r)}\|_{L^2} \lesssim \frac{1}{t^{5}}M^2
\]
\[
\| \frac{|\tw|^2}{1+r}\|_{L^1 \cap L^2} \lesssim  \| \tw\|_{L^2 \cap L^\infty}
\| \frac{\tw}{r}\|_{L^2} \lesssim \frac{1}{t^{4}}M^2
\]

Next we estimate the fourth term in $N^n$,
\[
|\frac{(\sin u - \sin \bu)(2 \bu_t \tu_t - 2 \bw \tw )}r| 
\les \frac{|\tu|( |\bu_t \tu_t| + | \bw \tw| )}r
\]
On behalf of \eqref{bwpoint}, \eqref{bulr} and \eqref{linX}, we have
\[
\| \frac{\tu \bu_t \tu_t}{r}  \|_{L^1 \cap L^2} \les \| \tu_t \|_{L^\infty} \| \frac{\tu}{\log(2+r)} \|_{L^2}  
\| \frac{\bu_t}{r} \log(2+r) \|_{L^\infty \cap L^2} \lesss \frac{\log t}{t^{4}} M^2
\]
\[
\| \frac{\tu \bw \tw}{r}  \|_{L^1 \cap L^2} \les \| \tu \|_{L^\infty} \| \bw \|_{L^2 \cap L^\infty}  
\| \frac{\tw}{r} \|_{L^2} \lesss t^{-4} M^2
\]

Finally we consider the last term in $N^n$,
\[
|\frac{(\sin u - \sin \bu - \cos \bu \cdot \tu)(\bu_t^2 - \bw^2)}{r}| \les \frac{\tu^2(\bu_t^2 + \bw^2)}{r}
\]
which, by using \eqref{bwpoint}, \eqref{bulr} and \eqref{linX}, we further bound as follows
\[
\begin{split}
\| \frac{\tu^2(\bu_t^2 + \bw^2)}{r} \|_{L^1 \cap L^2} & \les \| \frac{\tu}{\log(2+r)} \|_{L^2} \| \frac{\tu}{\log(2+r)} \|_{L^2 \cap L^\infty}
\| \frac{\bu_t^2 + \bw^2}{r} \log^2(2+r) \|_{L^\infty} \\
&  \lesss \frac{\log^2 t}{t^{5}} M^2
\end{split}
\]
\subsection{Conclusion} The proof of Proposition \ref{p:mn} is a direct consequence of all
the estimates in the previous subsections. Indeed, the result in part a) follows from \eqref{nbubw} and \eqref{Kest}.
The results in part b) follow from \eqref{nlinlx}, \eqref{nn} and \eqref{Kest}.

We are also ready to prove our main result.

\begin{proof}[Proof of Theorem \ref{t:main}]
Based on the results in Proposition \ref{p:mn}, one can iterate the equation \eqref{geq}
in the following space 
\[
\| \tw \|_{Y}=t^\frac32 \| \tw(t)\|_{LX} +
 t^\frac52 \| \partial_t \tw(t)\|_{LX} +
t^\frac52 \| \tw(t)\|_{\dot H^1} 
\]
The size of $\tu$ is controlled by using the results of Proposition \ref{propeg}. 
\end{proof}

\end{document}